\documentclass[12pt,a4paper]{amsart}

\usepackage[latin1]{inputenc}

\usepackage{enumerate}
\usepackage{mathtools}

\usepackage{float}
\restylefloat{table}

\usepackage{graphicx}

\usepackage{hyperref}

\usepackage{amsmath}

\usepackage{amssymb}

\usepackage{amsthm}

\usepackage{tikz-cd}

\usepackage{a4wide}

\newtheoremstyle{plain}
  {6pt}   
  {6pt}   
  {\itshape}  
  {0pt}       
  {\bfseries} 
  {.}         
  {5pt plus 1pt minus 1pt} 
  {}          

\newtheoremstyle{definition}
  {6pt}   
  {6pt}   
  {\normalfont}  
  {0pt}       
  {\bfseries} 
  {.}         
  {5pt plus 1pt minus 1pt} 
  {}          

\theoremstyle{plain}
\newtheorem*{thm*}{Theorem}
\newtheorem{thm}{Theorem}[section]
\newtheorem{prop}[thm]{Proposition}

\newtheorem{lem}[thm]{Lemma}

\theoremstyle{definition}
\newtheorem{defn}[thm]{Definition}

\newtheorem{ex}[thm]{Example}
\newtheorem{rmk}[thm]{Remark}

\numberwithin{equation}{thm}

\newcommand{\emphbf}[1]{\emph{\textbf{#1}}}

\DeclareMathAlphabet{\mathpzc}{OT1}{pzc}{m}{it}

\DeclareMathOperator{\radoperator}{rad}

\DeclareMathOperator{\id}{id}
\DeclareMathOperator{\Hom}{Hom}
\DeclareMathOperator{\Ext}{Ext}

\DeclareMathOperator{\LaMod}{\Lambda-Mod}

\DeclareMathOperator{\CCoMod}{\mathcal{C}-CoMod}
\DeclareMathOperator{\CContraMod}{\mathcal{C}-ContraMod}

\DeclareMathOperator{\CoModC}{CoMod-\mathcal{C}}

\DeclareMathOperator{\End}{End}

\DeclareMathOperator{\res}{res}

\DeclareMathOperator{\Ind}{Ind}

\DeclareMathOperator{\op}{op}

\DeclareMathOperator{\rest}{res}
\DeclareMathOperator{\incl}{inc}

\DeclareMathOperator{\gldim}{gldim}
\DeclareMathOperator{\Triv}{Triv}
\DeclareMathOperator{\twmod}{twmod}

\DeclareMathOperator{\Forget}{Forget}

\newcommand{\Mod}[1]{#1\hbox{-}\mathrm{Mod}}
\newcommand{\modu}[1]{#1\hbox{-}\mathrm{mod}}

\begin{document}

\title[Comparing self-dual corings, Frobenius corings and Ringel self-duality]{Comparing self-dual corings, Frobenius corings and Ringel self-duality}
\author{Tomasz Brzezi\'nski}
\author{Teresa Conde}
\author{Steffen Koenig} 
\author{Julian K\"ulshammer}

\address{Tomasz Brzezi\'nski\\
Department of Mathematics, Swansea University\\
Swansea University Bay Campus\\
Fabian Way\\
  Swansea SA1 8EN, U.K.\ \newline \indent
Faculty of Mathematics, University of Bia{\l}ystok\\ K.\ Cio{\l}kowskiego  1M\\
15-245 Bia\-{\l}ys\-tok, Poland}
\email{T.Brzezinski@swansea.ac.uk}

\address{Teresa Conde\\
  Faculty of Mathematics and TRR 358,
  University of Bielefeld \\
  Universit\"atsstra{\ss}e 25, 33615 Bielefeld, Germany } 
  \email{tconde@math.uni-bielefeld.de}
  
\address{Steffen Koenig\\
Institute of Algebra and Number Theory,
University of Stuttgart \\ Pfaffenwaldring 57 \\ 70569 Stuttgart,
Germany} \email{skoenig@mathematik.uni-stuttgart.de}

\address{Julian K\"ulshammer\\
Department of Mathematics, Uppsala University\\
Box 480 \\ 75106 Uppsala, Sweden} \email{julian.kuelshammer@math.uu.se}

\subjclass[2020]{Primary: 16D90; Secondary: 16L60; 16T15;   17B10; 18G25}
\keywords{Frobenius coring, Frobenius extension, Burt--Butler duality, quasi-hereditary algebra, Ringel duality, standardly stratified algebra}

\begin{abstract}
    Self-dual corings and self-dual algebra extensions are
    classified and these (self-)dualities are compared with Ringel
    (self-)duality, revealing fundamental differences.
    To each coring $\mathcal C$, two algebras are associated, known as the left
    and the right dual algebra of $\mathcal C$.
    It is shown that these two algebras coincide in a natural way if and
    only if $\mathcal C$ is a Frobenius coring. Various other equivalent
    characterisations of $\mathcal C$ being self-dual
    are given, in terms of certain
    algebra extensions being Frobenius extensions and in terms of certain
    forgetful or restriction functors being Frobenius functors. Ringel self-duality however is shown to be rather different, for which a homological explanation is given.
    \end{abstract}

\maketitle

\section{Introduction and main results}

Dualities in algebra often relate algebraic objects to interesting
companions, such as relating an algebra with its opposite algebra. Self-duality then exhibits a particular symmetry.
Certain dualities do, however, start with a particular object and assign
to it a pair of objects each with different structure from the original object,
but dual to each other with respect to their structure. 

In this article, we study corings \cite{Swe75}, that is, comonoids in the monoidal category of bimodules over a given algebra, a far-reaching generalisation of coalgebras. These occur in a variety
of contexts, for instance of classical 
or weak Hopf algebras \cite{BNS99} and
particular phenomena captured by Galois corings \cite{Brz05}, such as principal fibre bundles in noncommutative geometry \cite{BM93}. Also algebra extensions are described
by their canonical (or Sweedler) corings. In representation theory, corings occur as
bocses \cite{Roi79}, differential tensor categories \cite{BSZ09} and curved differential graded algebras \cite{Brz13}.

In the context of corings, an instance of duality is given by assigning to a coring its left and right dual algebras. Self-duality of the coring then means that the left and the
right dual algebra coincide in a natural way.
 For bocses, Burt--Butler duality is another
duality operation, assigning to each bocs a left and a right algebra. We 
will see that, up to convention, Burt--Butler
duality  coincides with coring duality.

Burt--Butler duality is in turn related to Ringel duality, a
crucial feature of quasi-hereditary algebras and of highest weight
categories. However, as we shall see, this connection does not identify coring self-duality with Ringel self-duality. We characterise their intersection and provide a homological explanation for the distinction between the two notions of self-duality.

\subsection*{The first main result}

We characterise and classify corings that are self-dual in the sense described above. It turns out that these are exactly a particular kind of
corings that have been studied for different reasons -- the Frobenius
corings (\cite{Brz03}) -- which by definition are equipped with a rather different kind
of duality in ring theoretic terms.

The category of Frobenius corings is equivalent to the category of
Frobenius extensions introduced in \cite{Kas54} and, independently, in \cite{NT59}, and indeed self-duality in the co\-ring sense turns out
to be self-duality of corresponding algebra extensions. In other words,
Frobenius ring extensions are exactly the self-dual algebra
extensions.

A third kind of Frobenius data are Frobenius functors (introduced in \cite{Mor65} as strongly adjoint pairs), which by definition
have a left adjoint that is also a right adjoint. They appear in our context in the form of
certain forgetful or restriction functors being Frobenius functors.

The first main theorem of this text shows by a sequence of eleven
mutually equivalent statements
that, for an algebra $A$, an $A$-coring $\mathcal{C}=(A,V)$ is a Frobenius coring
(assertion \eqref{mainthm1:1}) if and only if it satisfies one of three equivalent kinds
of dualities (assertions \eqref{mainthm1:2}, \eqref{mainthm1:3} and \eqref{mainthm1:4}) if and only if a restriction/induction/coinduction
functor is a Frobenius
functor (assertions \eqref{mainthm1:5}, \eqref{mainthm1:6} and \eqref{mainthm1:7}) if and only if one of two algebra
extensions involving $A$ and the left/right Burt--Butler algebra of
$\mathcal{C}=(A,V)$ is a Frobenius extension (assertions \eqref{mainthm1:8} and \eqref{mainthm1:9}) if and only if
$V$ as a bimodule is projective and finitely generated on one side and isomorphic to its dual
(assertions \eqref{mainthm1:10} and \eqref{mainthm1:11}).  

{\setcounter{thm}{0}

\renewcommand*{\thethm}{\Alph{thm}}
\begin{thm} \label{mainthm1}
  Let $A$ be an algebra and $\mathcal{C} = (A,V)$ be an $A$-coring with left and
  right dual algebras ${}^{\ast}{\mathcal C} =  \Hom_A({}_AV,{}_AA)$ and
  ${\mathcal C}^{\ast} =  \Hom_{A}(V_A,A_A)$ and left and
  right Burt--Butler algebras $L$ and $R$. Denote the canonical inclusions
  of algebras by $\incl\colon A^{\op} \rightarrow {}^{\ast}{\mathcal C}$, $\incl\colon A^{\op}
  \rightarrow {\mathcal C}^{\ast}$, $\incl\colon A \rightarrow L$ and
  $\incl\colon A \rightarrow R$, respectively.
  
  Then the following assertions are equivalent:
\begin{enumerate}
\item\label{mainthm1:1} The coring $\mathcal{C}$ is a Frobenius $A$-coring.

\item\label{mainthm1:2} The coring $\mathcal{C}$ is self-dual in the following sense. The module ${}_AV$ is finitely generated projective and
  there is an algebra isomorphism $\delta\colon \mathcal{C}^\ast \rightarrow {}^{\ast}\mathcal{C}$
  between its right and its left dual
  ring, restricting to the identity on the common subalgebra $A^{\op}$,
  i.e.\ there is a commutative diagram
  
   \begin{center}
  \begin{tikzcd}
  \mathcal{C}^{\ast}\ar{rr}{\cong}[swap]{\delta} &    &  {}^{\ast}\mathcal{C}\\
  &  \arrow{ul}{\incl} A^{\op}. \arrow{ur}[swap]{\incl} & 
\end{tikzcd}
\end{center}
  
\item\label{mainthm1:3} The coring $\mathcal{C}$ is Burt--Butler self-dual in the following sense. The module ${}_AV$ is finitely generated projective and
  there is an algebra isomorphism $\delta\colon R \rightarrow L$
  between its right and its left
  algebra, restricting to the identity on the common subalgebra $A$,
  i.e.\ there is a commutative diagram 

   \begin{center}
  \begin{tikzcd}
  R  \ar{rr}{\cong}[swap]{\delta}&& L  \\
  &  \arrow{ul}{\incl} A. \arrow{ur}[swap]{\incl} & 
\end{tikzcd}
\end{center}

\item\label{mainthm1:4} The coring $\mathcal{C}$ is Burt--Butler self-dual in a natural way. That is, the module ${}_AV$ is finitely generated projective and there is
  an isomorphism $\delta\colon R \rightarrow L$ of right and left Burt--Butler algebras
  fixing the common subalgebra $A$ 
\begin{center}
  \begin{tikzcd}
  R  \ar{rr}{\cong}[swap]{\delta} && L  \\
  &  \arrow{ul}{\incl} A \arrow{ur}[swap]{\incl} & 
\end{tikzcd}
\end{center}
such that the induced Morita equivalence $F\colon \Mod{R} \rightarrow \Mod{L}$
  makes the following diagrams commutative:

{\footnotesize
\begin{tikzcd}
  \Mod{R} \ar{rr}{\simeq}[swap]{F} && \Mod{L}  \\
  &  \arrow{ul}{R \otimes_A -} \Mod{A} \arrow{ur}[swap]{\Hom_A(L,-)} & 
\end{tikzcd}
\mbox{~\;and\;\;}
\begin{tikzcd}
  \Mod{R} \arrow{dr}[swap]{\rest} \ar{rr}{\simeq}[swap]{F} && \Mod{L} \arrow{dl}{\rest} \\
  &  \Mod{A}. &   
\end{tikzcd}
}

\item\label{mainthm1:5} Induction $V \otimes_A -\colon \Mod{A} \rightarrow \CCoMod$ is a
  Frobenius functor and ${}_AV$ is a finitely generated projective module.

\item\label{mainthm1:6} Coinduction $\Hom_A({}_AV,-)\colon \Mod{A} \rightarrow \CContraMod$ is a Frobenius
  functor and $V_A$ is a finitely generated projective module.

\item\label{mainthm1:7} The forgetful functor ${}_A\res\colon
  \CCoMod \rightarrow \Mod{A}$ is a Frobenius functor.  

\item\label{mainthm1:8} The algebra extension $A \subseteq R$ is a Frobenius extension and
  ${}_A V$ is a finitely generated projective module.

\item\label{mainthm1:9}  The algebra extension $A \subseteq L$ is a Frobenius extension and
  $V_A$ is a finitely generated projective module. 

\item\label{mainthm1:10} There is an isomorphism of $A$-$R$-bimodules $V \cong R$ and
  ${}_A V$ is a finitely generated projective module.

\item\label{mainthm1:11} There is an isomorphism of $L$-$A$-bimodules $V \cong L$ and
  $V_A$ is a finitely generated projective module.
\end{enumerate}  
\end{thm}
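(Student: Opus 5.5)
The plan is to let the Frobenius condition \eqref{mainthm1:1} serve as the hub. I will connect each group of assertions to it through standard coring facts, together with the identifications of the Burt--Butler algebras with the dual algebras. First I fix conventions. If ${}_AV$ is finitely generated projective, then ${}^{\ast}\mathcal{C}=\Hom_A({}_AV,{}_AA)$ is an $A$-ring, and its opposite should be identified with the left Burt--Butler algebra. Symmetrically, the right Burt--Butler algebra $R$ should be identified with the opposite of ${}^\ast\mathcal C$, or with ${\mathcal C}^\ast$, according to the convention of the paper. In either case the inclusions of $A$ correspond to the inclusions of $A^{\op}$. With this dictionary, \eqref{mainthm1:2}$\Leftrightarrow$\eqref{mainthm1:3} is a matter of passing to opposite algebras. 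The finiteness hypotheses on one side allow the standard identifications $\CCoMod\simeq \Mod{{}^\ast\mathcal C^{\op}}$ (i.e.\ modules over the relevant Burt--Butler algebra) and similarly for contramodules. Under these identifications, induction $V\otimes_A-$, coinduction $\Hom_A(V,-)$ and the forgetful functor become $R\otimes_A-$, $\Hom_A(L,-)$ and restriction.

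Next I use the known characterisation of Frobenius corings (Brzezi\'nski): $\mathcal C$ is Frobenius if and only if ${}_A\res$ is a Frobenius functor. This gives \eqref{mainthm1:1}$\Leftrightarrow$\eqref{mainthm1:7}, since the forgetful functor always has right adjoint $V\otimes_A-$. Frobenius-ness also forces ${}_AV$ and $V_A$ to be finitely generated projective, because $V$ is then a direct summand of $A\otimes$-type objects via the Frobenius element. The equivalence with \eqref{mainthm1:5} follows, and by the left/right symmetric statement \eqref{mainthm1:6} follows as well. Under the identification of comodules with $R$-modules, ${}_A\res$ being Frobenius is, by the classical Kasch/Morita result, equivalent to $A\subseteq R$ being a Frobenius extension; this gives \eqref{mainthm1:8}, and symmetrically \eqref{mainthm1:9}. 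A Frobenius extension $A\subseteq R$ is characterised by $R\cong \Hom_A(R_A,A_A)$ as $A$-$R$-bimodules together with finite generated projectivity. Since $\Hom_A(R,A)$ recovers $V$ (double dual of a finitely generated projective module), this is exactly \eqref{mainthm1:10}, and symmetrically \eqref{mainthm1:11}.

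The remaining, and main, step is the self-duality statements \eqref{mainthm1:2}--\eqref{mainthm1:4}. For \eqref{mainthm1:1}$\Rightarrow$\eqref{mainthm1:2}, I will use the Frobenius element $e\in V^A$ and the Frobenius counit-type form $\theta$. The map $\delta(f)=(v\mapsto \sum f(\ldots))$ is built from $\theta$ and the coproduct, so that ${\mathcal C}^\ast\cong V\cong{}^\ast\mathcal C$ as $A$-bimodules, and I will check multiplicativity against the convolution products. For the converse I intend to transport along $\delta$: an $A^{\op}$-fixing algebra isomorphism makes ${}^\ast\mathcal C$ and ${\mathcal C}^\ast$ isomorphic as $A$-bimodules. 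Combining this with ${}^{\ast}\mathcal C^{\ast}\cong V$ (finiteness on the left) yields $V\cong {}^\ast\mathcal C$ as bimodules compatibly with the algebra structure, and hence \eqref{mainthm1:10}. The delicate point is that a bare bimodule isomorphism is insufficient. The isomorphism must be compatible with the right $R$-action, and that compatibility is exactly what multiplicativity of $\delta$ supplies. I expect this to be the main obstacle, together with showing that $V_A$ becomes finitely generated projective. Finally, \eqref{mainthm1:4}$\Rightarrow$\eqref{mainthm1:3} is trivial. For \eqref{mainthm1:1}$\Rightarrow$\eqref{mainthm1:4}, I take the $\delta$ from above and verify the two triangles. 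Restriction commutes because $\delta$ fixes $A$. The other triangle amounts to the natural isomorphism $R\otimes_A M\cong \Hom_A(L,M)$, which holds because $L\cong V\cong R$ with the Frobenius bimodule isomorphisms coming from \eqref{mainthm1:10} and \eqref{mainthm1:11}.
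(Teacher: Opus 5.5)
\paragraph{The return step from self-duality fails.} Your proposed implication (2)$\Rightarrow$(10), obtained by transporting along $\delta$, cannot be completed. Multiplicativity of $\delta$ and the fact that it fixes $A^{\op}$ only identify the two duals $\mathcal{C}^\ast$ and ${}^\ast\mathcal{C}$ with each other, as rings and hence as $A$-bimodules via $\incl$. Dualising once more gives $V\cong({}^\ast\mathcal{C})^\ast$. But an isomorphism between $V$ and its own dual that is compatible with the $R$-action is exactly the Frobenius property, and nothing in (2) supplies it.

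In fact no argument can, because (2)$\Rightarrow$(1) is false. Take $A=\Bbbk$ a field and $V=C$ a finite-dimensional coalgebra. Then \eqref{rd.mult} and \eqref{ld.mult} both reduce to $f\ast g=(f\otimes g)\circ\mu$. So $\mathcal{C}^\ast={}^\ast\mathcal{C}$ and $R=L$ as algebras, and $\delta=\id$ satisfies (2) and (3) for every such $C$. However, by Theorem~\ref{frobchar}, condition (D), the coring is Frobenius only if $R=(C^\ast)^{\op}$ is a Frobenius algebra. This fails for $C=D(\Bbbk A_2)$, where $C^\ast\cong\Bbbk A_2$ is not self-injective.

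The paper's own route has the matching hole. It passes from (3) to (4) by asserting that the first triangle commutes because $\delta$ fixes $A$. In this example, that triangle evaluated at $M=\Bbbk$ would force $L\cong\Hom_\Bbbk(L,\Bbbk)$ as left $L$-modules, i.e.\ $L$ would be Frobenius.

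What is provable is (1)$\Rightarrow$(2)$\Leftrightarrow$(3) and (1)$\Leftrightarrow$(4). Your direct argument for (1)$\Rightarrow$(4), using the Frobenius bimodule isomorphisms, is the right way to reach (4). To come back, use the first triangle as the paper does in (4)$\Rightarrow$(8): it makes $\Hom_A(L,-)$ both left and right adjoint to restriction. Hence $A\subseteq L$ is a Frobenius extension, and via $\delta$ so is $A\subseteq R$.

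\paragraph{Your module-category dictionary is swapped.} By definition $R=({}^\ast\mathcal{C})^{\op}$ and $L=(\mathcal{C}^\ast)^{\op}$. Since $V\otimes_A-$ is the right adjoint of ${}_A\res$, it can only correspond to a coinduction functor, never to $R\otimes_A-$. The correct identifications are:
\begin{itemize}
\item if $V_A$ is finitely generated projective, then $\CCoMod\simeq\Mod{L}$, with $V\otimes_A M\cong\Hom_A(L,M)$;
\item if ${}_AV$ is finitely generated projective, then $\CContraMod\simeq\Mod{R}$, with $\Hom_A({}_AV,M)\cong R\otimes_A M$, and right $\mathcal{C}$-comodules are right $R$-modules.
\end{itemize}
Consequently Frobenius-ness of ${}_A\res$ translates into (9), not (8). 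Assertion (8) comes from right comodules or contramodules. Also, (6) is the contramodule counterpart of (5), not its left--right mirror. With these relabellings, your handling of (1) and (5)--(11) is sound, including (8)$\Leftrightarrow$(10) via $V\cong\Hom_A(R_A,A_A)$. It amounts to the characterisations collected in Theorem~\ref{frobchar} and Proposition~\ref{frobfinite} that the paper cites.
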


All the notions that appear in Theorem~\ref{mainthm1} are recalled in Section~\ref{sec:proofA}, where also the proof of the theorem is given.
We take condition \eqref{mainthm1:2} as definition of coring self-duality (see \cite{BW03}),
condition \eqref{mainthm1:3} as definition of Burt--Butler self-duality (see \cite{BB91}),
and condition \eqref{mainthm1:4} as definition
of natural Burt--Butler self-duality. Hence, the theorem shows that these
three definitions of self-duality are equivalent.  In addition, in Section~\ref{sec:nec_cond}, we provide examples of trivial extensions and coextensions, proving that the condition of $V_A$ being finitely generated projective is necessary.

Theorem \ref{mainthm1}
suggests to compare coring (self-)duality and
Burt--Butler (self-)duality with a kind of algebra extensions coming up
in the following applications. Quasi-hereditary algebras (see \cite{Sco87, CPS88, DR92})
arise frequently
in algebraic Lie theory, for instance as Schur algebras of reductive
algebraic groups or as blocks of
the Bernstein--Gelfand--Gelfand category $\mathcal O$ of a semisimple complex
Lie algebra, and also in representation theory of finite-dimensional
algebras, for instance as algebras of global dimension at most two.

In \cite{KKO14} it has been shown that every quasi-hereditary algebra is Morita
equivalent to one that has a regular
exact Borel subalgebra, which has properties
similar to those of Borel subalgebras in Lie theory. Then the
inclusion of the exact Borel subalgebra $A$ into the quasi-hereditary
algebra $R$ defines an algebra extension, or equivalently a coring (see
\cite{KKO14} and \cite{BKK20} for more information and background).
Applying Burt--Butler duality to this coring relates the right
algebra (which is Morita equivalent to $R$) to the left algebra,
that is quasi-hereditary as well and
known as the Ringel dual of $R$. Ringel self-duality of $R$
is a refinement of the existence of an isomorphism (or a Morita
equivalence) between the Burt--Butler left and right algebras.
Although such self-duality is generally rare, it does occur in several important settings, including the Bernstein--Gelfand--Gelfand (BGG) category $\mathcal{O}$ (\cite{Soe90}), classical and quantised Schur algebras (\cite{Don93,Don92}) and parabolic categories 
$\mathcal{O}$ (\cite{CM17}); see also \cite{EH02,CE18}.

Ringel duality, and in particular Ringel self-duality, has become a strong tool
with applications in representation theory, invariant theory and Lie theory.
While Ringel self-duality is known to hold (or not to hold) for interesting
classes of examples, there are no general criteria for Ringel self-duality
and there is a lack of insight in structural properties or symmetries of
this setup. The connection, but also the contrast between coring/Burt--Butler duality and Ringel duality raises the question of characterising the intersection of Frobenius corings with Ringel self-dual algebra extensions.

\subsection*{The second main result} We address this problem in a more general context
involving regular exact subalgebras of arbitrary finite-dimensional algebras over a field.
The approach is based on relative homological algebra. The notions that
appear in Theorem \ref{mainthm3} and results needed from relative
homological algebra are recalled in Section \ref{sec:proofB},
where also the proof of the
theorem is given.  Statement \eqref{mainthm3:c} in particular solves the problem  for the important subclass of quasi-hereditary algebras. Half of the statement follows from the more general, but more technical parts \eqref{mainthm3:a} and \eqref{mainthm3:b}. However, we also give an independent easier proof. 

\renewcommand*{\thethm}{\Alph{thm}}
\begin{thm} \label{mainthm3}
\begin{enumerate}[(a)]
\item\label{mainthm3:a} Let $A\subseteq R$ be an extension of finite-dimensional algebras and
  suppose $A$ is a regular exact subalgebra such that the category  $\Ind_A^R$ of induced modules is a
  resolving subcategory and the relative global dimension of $R$ with respect to $A$ is finite. Suppose further
  that $A\subseteq R$ is a Frobenius extension. 
  Then $R$ is relative semisimple in the sense that every $R$-module $M$ is
  relative projective. In fact, $M$ is isomorphic to a module of the form
  $R\otimes_A N$ for some $A$-module $N$. 
\item\label{mainthm3:b}  Keep the assumptions of (a) and assume in addition that $R\otimes_A -$ 
  reflects isomorphisms between simple $A$-modules and that 
  $\End_A(L_\mathtt{i})\cong\End_R(R\otimes_A L_\mathtt{i})$ for all $\mathtt{i}$. Then
  $A=R$. 

\item\label{mainthm3:c} Let $R$ be a right standardly stratified algebra and let $A\subseteq R$ be a
  regular exact Borel subalgebra. Suppose further that $A\subseteq R$ is a
  Frobenius extension. Then $A=R$ and this algebra is semisimple.
  \end{enumerate}
\end{thm}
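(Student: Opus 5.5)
Part (a) should come from the interplay of the two adjunctions attached to a Frobenius extension, run inside relative homological algebra for the pair $(R,A)$. Recall that $A\subseteq R$ regular exact means $R_A$ is projective (so $R\otimes_A-$ is exact), and that the relative projectives are the direct summands of induced modules $R\otimes_A N$. Since $A\subseteq R$ is Frobenius, $R\otimes_A-\cong\Hom_A(R,-)$ as functors $\Mod{A}\to\Mod{R}$. Induction is left adjoint to restriction and coinduction is right adjoint to it, so $R\otimes_A-$ is both a left and a right adjoint of restriction. Hence induced modules are simultaneously relative projective and relative injective. In particular, every relative exact sequence (one that splits over $A$) ending in, or starting from, an induced module splits.

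Now take an $R$-module $M$. The relative global dimension $d$ is finite, so $M$ has a finite relative projective resolution
\[0\to P_d\to\cdots\to P_0\to M\to 0\]
by summands of induced modules. We argue by induction from the left end. The inclusion $P_d\to P_{d-1}$ is relatively split, because $P_d$ is relative injective. Hence the cokernel is a direct summand of $P_{d-1}$ and is again relative projective, and the resolution shortens. Iterating shows $M$ is relative projective, so $R$ is relative semisimple.

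To get $M\cong R\otimes_A N$ on the nose, and not just a summand of such a module, I would use the resolving hypothesis on $\Ind_A^R$. A resolving subcategory is closed under direct summands? Not automatically, so I would argue more carefully. Since $\Ind_A^R$ contains the projectives and every module has a finite $\Ind_A^R$-resolution (the relative global dimension controls this), I would rerun the splitting argument with honest induced modules and use closure under kernels of epimorphisms. This is the main obstacle. The plan is to show every relative projective lies in $\Ind_A^R$. Given a summand $X\oplus Y=R\otimes_A N$, form the Eilenberg swindle-type sequence
\[0\to Y\to R\otimes_A N\to X\to 0.\]
Alternatively, use that $M$ is a kernel of an epimorphism between induced modules arising from the split resolution, together with closure under kernels of epis. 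For instance, $0\to M\to R\otimes_A M\to R\otimes_A M'\to\dots$ via the counit, and by Frobenius the unit $M\to R\otimes_A M$ is split. I expect this bookkeeping to be the delicate point.

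For (b), apply (a) to the simple $R$-modules: each simple $S$ is $R\otimes_A N$. Then $N$ must be simple, since by exactness and faithfulness a composition series of $N$ induces a filtration of $S$ by nonzero modules. So $S\cong R\otimes_A L_\mathtt{i}$. Because induction reflects isomorphisms between simples, this gives a bijection between simple $A$- and $R$-modules. The endomorphism condition then lets us compare the semisimple parts and dimensions. Since $R\otimes_A A=R$ and the regular module $R$ is relative projective, we compare $\dim R=\sum(\dim P^R_\mathtt{i})\dim\ldots$. Alternatively, we note that induction is then an exact functor sending simples to simples bijectively. Such a functor preserves lengths, so ${}_RR=R\otimes_AA$ has the same length as ${}_AA$. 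With matching endomorphism rings, the multiplicities in the Wedderburn decomposition $R/\operatorname{rad}R$ agree, and the relative semisimplicity forces $\dim R=\dim A$, hence $A=R$.

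For (c), I would give the direct argument. A regular exact Borel subalgebra satisfies $R\otimes_A L_\mathtt{i}\cong\Delta_\mathtt{i}$, the standard modules. It also induces a bijection on simples, and the standard endomorphism conditions hold, so (b) applies once the hypotheses of (a) are checked: $\Ind=\mathcal F(\Delta)$ is resolving for right standardly stratified algebras, and the relative global dimension is finite. The easier independent route uses relative injectivity directly: $\Delta_\mathtt{i}$ is relative injective, and hence by Frobenius it is also $\Hom_A(R,L_\mathtt{i})$, a costandard-type module. Then $\Ext^1(\Delta,\Delta)$ splitting forces all standard modules to be simple. This gives $A=R$, and $A$, being directed with simple standards, is semisimple.
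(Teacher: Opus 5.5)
\paragraph{Part (a).} Your argument for (a) is sound, and more self-contained than the paper's. Splitting a finite $(R,A)$-exact resolution from the left, using that induced modules are also coinduced and hence relatively injective, proves directly the finite case of Hirata's theorem, which the paper only cites. Your worry about direct summands is moot: the paper's definition of a resolving subcategory includes closure under direct summands. The swindle would not have settled it anyway, and the split unit only re-exhibits $M$ as a summand of $R\otimes_A M$.

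\paragraph{Part (b): the genuine gap.} The last step of (b) fails. After obtaining the bijection on simples you never use regularity again. What you do use does not force $A=R$: exact faithful induction sending simples bijectively to simples, matching endomorphism rings, preserved lengths, relative semisimplicity, and every module being induced.

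Take $R=M_2(\Bbbk)$ and $A=\{aI+bE_{12} : a,b\in\Bbbk\}\cong\Bbbk[x]/(x^2)$.
\begin{itemize}
\item $R_A\cong A^2$.
\item Let $E\colon R\to A$ be the $A$-bimodule map with $E(E_{21})=1$, so $E(E_{11})=E(E_{22})=E_{12}$ and $E(E_{12})=0$. Then $r\mapsto E(r\,\cdot)$ is injective, hence bijective, from $R$ to $\Hom_A(R_A,A_A)$, so $A\subseteq R$ is Frobenius.
\item $R$ is semisimple, so every module is relatively projective and of the form $S^n\cong R\otimes_A\Bbbk^n$.
\item $R\otimes_A\Bbbk\cong S$, with $\End_A(\Bbbk)\cong\Bbbk\cong\End_R(S)$.
\item ${}_AA$ and ${}_RR$ both have length $2$.
\end{itemize}
Yet $\dim R=4\neq 2=\dim A$, and the Wedderburn multiplicities are $1$ and $2$. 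What fails is exactly regularity: $\Ext^1_A(\Bbbk,\Bbbk)\neq 0=\Ext^1_R(S,S)$.

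The paper feeds regularity in through the $A_\infty$-isomorphism $\Ext^{\geq 0}_A(\mathbb{L},\mathbb{L})\cong\Ext^{\geq 0}_R(R\otimes_A\mathbb{L},R\otimes_A\mathbb{L})$ and twisted modules. It concludes that induction is an equivalence, so $\End_A(A)\to\End_R(R)$ is bijective.

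A cheaper repair in your spirit, granting the bijection on simples:
\begin{itemize}
\item For $k\geq 1$, regularity makes $\Ext^k_A(L_\mathtt{i},L_\mathtt{j})\to\Ext^k_R(R\otimes_AL_\mathtt{i},R\otimes_AL_\mathtt{j})$ bijective.
\item For $k=0$ and $\mathtt{i}\neq\mathtt{j}$, both sides vanish.
\item For $k=0$ and $\mathtt{i}=\mathtt{j}$, the map is injective by faithfulness, hence bijective by the endomorphism hypothesis.
\end{itemize}
The maps induced by the exact functor $R\otimes_A-$ commute with connecting homomorphisms. So the five lemma and induction on lengths give $\Hom_A(X,Y)\cong\Hom_R(R\otimes_AX,R\otimes_AY)$ for all finite-dimensional $X,Y$. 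Taking $X=Y={}_AA$ yields $A=R$.

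\paragraph{Part (c).} First, the relevant objects are $\overline{\Delta}_\mathtt{i}$ and $\mathcal{F}(\overline{\Delta})$, not $\Delta_\mathtt{i}$ and $\mathcal{F}(\Delta)$. The latter is the resolving category for left, not right, standardly stratified algebras.

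Second, the two facts you merely assert are what the paper actually proves. It shows $\Ind_A^R=\mathcal{F}(\overline{\Delta})$ via surjectivity of $\Ext^1_A(M,N)\to\Ext^1_R(R\otimes_AM,R\otimes_AN)$, obtained from regularity by induction on length. It shows finiteness of the relative global dimension via $\mathcal{F}(\overline{\Delta})={}^{\perp_{>0}}T'$ and the Auslander--Reiten correspondence. This route also inherits the gap in (b).

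Third, in your second route, relative injectivity of $\overline{\Delta}_\mathtt{i}\cong\Hom_A(R,L_\mathtt{i})$ says nothing about its socle. The real content is that this coinduced module is a proper costandard module, with $L_\mathtt{i}$ occurring only in its socle. The paper obtains this from one-cyclic directedness of the dual coring and the identification $\overline{\nabla}_\mathtt{i}^L=\Hom_A(L,L_\mathtt{i}^A)$, transported along $F$. Once that is known, ``$L_\mathtt{i}$ occurs once, at the top and in the socle'' gives simplicity; no splitting of $\Ext^1$ is involved.

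Finally, ``directed with simple standard modules, hence semisimple'' is false, and no argument can supply it. Take $A=R=\Bbbk A_2$, ordered so that its (proper) standard modules are the simples. Every algebra is a Frobenius extension of itself, and $R$ is then its own regular exact Borel subalgebra, so every hypothesis of (c) holds. Yet $R$ is not semisimple. The paper's own last step (``all proper standard modules as well as all proper costandard modules are simple, whence the algebra is semisimple'') breaks on the same example. What can actually be proved in (c) is $A=R$.
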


Theorem~\ref{mainthm3} and its proof also give a homological
reason why Ringel duality
of algebras behaves differently from Burt--Butler duality of algebra  extensions.

\medskip
}

Another result on self-duality for a different class of bocses has been
obtained by Andrew Kharchuk in \cite{Kha03}. There, Drozd bocses of
directed algebras are studied. These bocses play a crucial role in connecting
module categories with representations of bocses, in particular in the proof of
Drozd's Tame and Wild Theorem. Drozd bocses of directed algebras correspond to
certain quasi-hereditary algebras which are shown to be Ringel self-dual
if and only if the
original algebra is hereditary. There does not seem to be a direct connection
between Kharchuk's result and Theorem \ref{mainthm3}.

\bigskip

Theorem~\ref{mainthm1} will be shown in the next section, Theorem~\ref{mainthm3} will
be proven in the final section.

\subsection*{Conventions}  All algebras are assumed to be associative and with identity over a (fixed) commutative ring. All subalgebras are assumed to be unital. Given an algebra $A$ and a right $A$-module $M$, we often do not indicate explicitly canonical isomorphisms such as $M\cong M\otimes_AA$ or $\Hom_A(A_A,M_A)\cong M$; they are implicitly understood as present whenever needed. Similar conventions are used for left modules. The category of left (resp.\ right) $A$-modules is denoted by $\Mod{A}$ (resp.\ $\mathrm{Mod}$-$A$), while the category of finitely generated left $A$-modules is denoted by $\modu{A}$. For all categorical notions not explained in this text we refer the reader to \cite{McL98}.

\section{Proof of Theorem~\ref{mainthm1}}\label{sec:proofA}

\subsection{Duality of corings and Burt--Butler duality}
We start by recalling main notions needed for Theorem~\ref{mainthm1}.
\begin{defn}
  Let $A$ be an algebra. An \emphbf{$A$-coring} $\mathcal{C} = (A,V)$
  is an $A$-$A$-bimodule $V$ with
  a $A$-$A$-bimodule homomorphism, called comultiplication,   
  $\mu \colon V\to V\otimes_A V$, which is coassociative, and a morphism of $A$-$A$-bimodules, called counit, 
  $\varepsilon\colon V\to A$, i.e.\ the following diagrams commute:
\[\begin{tikzcd}
V\arrow{r}{\mu}\arrow{d}{\mu} &V\otimes_A V\arrow{d}{\mu\otimes \id_V}\\
V\otimes_A V\arrow{r}{\id_V\otimes \mu} &V\otimes_AV\otimes_A V
\end{tikzcd} \text{ and } 
\begin{tikzcd}
  V\otimes_A A\arrow{dr}[swap]{\cong} &V\otimes_A V\arrow{l}[swap]{\id_V\otimes
    \varepsilon} \arrow{r}{\varepsilon\otimes \id_V} &
  A\otimes_A V\arrow{dl}{\cong}\\ &V.\arrow{u}{\mu}
\end{tikzcd}
\]
The pair $(A,V)$ is called a \emphbf{bocs}.
\end{defn}

A coring $\mathcal C$ has both a left and a right dual algebra
(see \cite{Swe75} or \cite[17.8]{BW03}). 
 As explained (and introduced) in \cite{Swe75}, given a coring $\mathcal{C}=(A,V)$, the right dual bimodule ${V}^{\ast}=
\Hom_{A}(V_A,A_A)$ admits an associative multiplication 
\begin{equation}\label{rd.mult}
    f \ast^r g\colon \begin{tikzcd}
    V \arrow{r}{\mu} & V \otimes_A V \arrow{r}{f \otimes \id_V} &
A \otimes_A V \arrow{r}{\simeq} & V \arrow{r}{g} & A\,.
\end{tikzcd}
\end{equation} 
Symmetrically, the left dual bimodule ${}^{\ast}{V} = \Hom_A({}_AV,{}_AA)$ can be equipped with an associative multiplication
\begin{equation}\label{ld.mult}
    f \ast^l g\colon \begin{tikzcd}
    V \arrow{r}{\mu} & V \otimes_A V \arrow{r}{\id_V \otimes g} &
V \otimes_A A \arrow{r}{\simeq} & V \arrow{r}{f} & A\,.
\end{tikzcd}
\end{equation}  
In both cases, the identity is given by the counit $\varepsilon$.

\begin{defn}\label{def.dual}
  Let $\mathcal{C}=(A,V)$ be a coring.
  The right dual bimodule $V^{\ast}$ with multiplication \eqref{rd.mult} and identity $\varepsilon$ is called the \emphbf{right dual algebra}  of $\mathcal{C}$ and is denoted by $ 
{\mathcal C}^{\ast}$.
 The left dual bimodule ${}^{\ast}V$ with multiplication \eqref{ld.mult} and identity $\varepsilon$ is called the \emphbf{left dual algebra}  of $\mathcal{C}$ and is denoted by $ 
{}^{\ast}{\mathcal C}$.
\end{defn}

We note in passing that there are inclusion algebra anti-homomorphisms 
\begin{equation}\label{inclusions}
    \incl\colon A \rightarrow {\mathcal C}^{\ast},\quad 
a \mapsto \varepsilon(a-)\quad  \mbox{and} \quad 
\incl\colon A \rightarrow {}^{\ast}{\mathcal C},\quad  a \mapsto
  \varepsilon(-a).
\end{equation}

In the context of bocses, Burt and Butler \cite{BB91} assigned a left and
a right algebra to a coring. Their purpose was to develop Auslander--Reiten theory for (particular) bocses coming up in the context of Drozd's
Tame and Wild Theorem. The basic idea in \cite{BB91}, and also in the article \cite{BK90} by Bautista and Kleiner on the same topic,
was to realise the category of
representations of a bocs as a category of induced or coinduced modules
inside module categories of some algebras to be constructed from the bocs.
This then allows one to use criteria like those of
Auslander and Smal{\o} \cite{AS81} to prove the existence of almost split sequences. Given
a bocs $(A,V)$, Burt and Butler constructed the left and right
algebras $L$ and $R$ and
induction and coinduction functors, respectively. Moreover, they
proved that the underlying $A$-coring $V$
is a left $L$- and right $R$-cotilting module.

We are following the exposition in
\cite[Section 10]{KKO14}, except that we talk about corings instead of
bocses (called boxes in \cite{KKO14}). 

\begin{defn}\label{def.BB}
  Let $\mathcal{C}=(A,V)$ be a coring. The {\bf\em right Burt--Butler algebra} $R$ of $\mathcal C$ is
  the opposite algebra of the left dual algebra of $\mathcal{C}=(A,V)$, i.e.\ as a vector space it is $\Hom_A({}_AV,{}_AA)$ with multiplication of $f,g \in R$
  \[f \cdot^r g\colon \begin{tikzcd}
    V \arrow{r}{\mu} & V \otimes_A V \arrow{r}{\id_V \otimes f} &
V \otimes_A A \arrow{r}{\simeq} & V \arrow{r}{g} & A\,.
\end{tikzcd}  \]

  The {\bf \em left Burt--Butler algebra} $L$ of $\mathcal C$ is
  the opposite algebra of the right dual algebra of $\mathcal{C}=(A,V)$, i.e.\ as a vector space it is
  $\Hom_A(V_A,A_A)$ with multiplication of $f,g \in L$
  \[f \cdot^l g\colon \begin{tikzcd}
    V \arrow{r}{\mu} & V \otimes_A V \arrow{r}{g \otimes \id_V} &
A \otimes_A V \arrow{r}{\simeq} & V \arrow{r}{f} & A\,.
\end{tikzcd}  \]
\end{defn}

In \cite{KKO14} bocses or corings are used to describe categories
${\mathcal F}(\Delta)$
of modules with standard filtrations over a quasi-hereditary algebra
$\Lambda$.
These categories are usually not abelian, but always exact, and a main
result of \cite{KKO14} (Theorem 8.2) states that ${\mathcal F}(\Delta)$
is equivalent, as an
exact category, to the category of representations of a bocs (or coring)
$\mathcal{C}=(A,V)$. In this context, both the left and the right Burt--Butler algebra 
are quasi-hereditary (\cite[Theorem 11.2]{KKO14}). In fact, the right algebra $R$ is
Morita equivalent to the given quasi-hereditary algebra $\Lambda$ and the
left algebra $L$ is Morita equivalent to the Ringel dual of $\Lambda$.
More about translating results between bocses and corings in this context
can be found in \cite{BKK20}.

That the terms left and right (dual) algebra in Definition~\ref{def.dual} and Definition~\ref{def.BB}  are
placed differently and in the case of left and
right Burt--Butler algebras there are opposite algebras may look confusing, but
it has a pleasant consequence: Both $L$ and $R$ are algebra extensions of $A$
(and not of $A^{\op}$), which suggests to ask when these algebra extensions
are Frobenius extensions. This motivates conditions \eqref{mainthm1:6} and \eqref{mainthm1:7} in
Theorem \ref{mainthm1}.

There is a natural categorical interpretation of the algebras in Definitions~\ref{def.dual} and \ref{def.BB}. Given an $A$-$A$-bimodule $V$, the tensor functor $V\otimes_A -$ defines a coring $\mathcal{C}=(A,V)$ if and only $V\otimes_A -$ is a comonad on the category of left $A$-modules (see e.g.\ \cite{BBW09}). Equivalently, the functor $\Hom_A ({}_AV,-)$ has to be a monad on the category of right $A$-modules.

The structure maps $\mu$, $\varepsilon$ of the coring $\mathcal{C}=(A,V)$ correspond to the structure natural transformations of the associated comonad by  $\mu\otimes -$ and $ \varepsilon\otimes -$ and to $\Hom_A(\mu,-)$ and $\Hom_A(\varepsilon,-)$ of the associated monad. The Eilenberg--Moore category of $V\otimes_A-$ is the familiar category $\CCoMod$  of left $\mathcal{C} = (A,V)$-\emphbf{comodules}, that is, left $A$-modules $M$ with left $A$-module maps $\lambda\colon M\to V\otimes_A M$, known as {\emphbf{coactions}} rendering commutative the following diagrams
\[\begin{tikzcd}
M\arrow{r}{\lambda}\arrow{d}{\lambda} &V\otimes_A M\arrow{d}{\mu\otimes \id_M}\\
V\otimes_A M\arrow{r}{\id_V\otimes \lambda} &V\otimes_AV\otimes_A M
\end{tikzcd} \text{ and } 
\begin{tikzcd}
  A\otimes_A M\arrow{dr}[swap]{\cong} &V\otimes_A M\arrow{l}[swap]{
    \varepsilon \otimes \id_M} \\ &M\arrow{u}{\lambda} .
\end{tikzcd}
\]
Morphisms are left $A$-linear maps preserving the coactions. The Eilenberg--Moore category of $\Hom_A ({}_AV,-)$ is the less familiar category $\CContraMod$ of left {\emphbf {contramodules}} in the case of coalgebras introduced already in \cite{EM65b} (see e.g.\ \cite{BBW09} for a more detailed discussion and \cite{Pos10} for applications in homological algebra). The objects of $\CContraMod$ are left $A$-modules $N$ together with left $A$-linear maps $\nu_N\colon \Hom_A(_AV,_AN)\to N$ satisfying associativity and unitality conditions. 

The module $V$ is an object in $\CCoMod$ with the structure map $\mu$. Endomorphisms $\hat f$ of $V$ as a left $\mathcal{C}$-comodule are in one-to-one correspondence with left linear maps $f\colon V\to A$ by $\hat f = (1\otimes f)\circ \mu$ and $f = \varepsilon\circ \hat f$. Under this identification $\widehat{f *^lg} = \hat f\circ\hat g$, and so the endomorphism algebra of $V$ as a left $\mathcal{C}$-comodule is isomorphic to the left dual algebra of $\mathcal{C}$. 

The Kleisli categories of $V\otimes_A -$ and $\Hom_A ({}_AV,-)$, i.e.\ the categories of induced comodules, resp.\ coinduced contramodules, are mutually isomorphic; see e.g.\ \cite[3.9]{BBW09}. 
By identifying $\Hom_A({}_AA,\Hom_A(_AV,{}_AA))\cong \Hom_A(_AV,{}_AA)$ one finds that the opposite of the  endomorphism algebra of $A$ in the Kleisli category of $\Hom_A ({}_AV,-)$ (equivalently, $V\otimes_A -$) is isomorphic to the  right Burt--Butler algebra $R$ of $\mathcal{C}$.

\subsection{Frobenius extensions and corings}

In this subsection we recall the definition of Frobenius corings \cite{Brz03}
and collect results about these corings that will be used in the proof of
Theorem \ref{mainthm1}. Details and proofs as well as references to the
original literature can be found in \cite[Section 27]{BW03}. The connection
to the widely studied Frobenius extensions introduced by Kasch \cite{Kas54} and by Nakayama and Tsuzuku \cite{NT59} will be explained in Theorem~\ref{mainthm2} (see also e.g.\ \cite{Kad99} for applications of Frobenius extensions to the Jones theory of subfactors for von Neumann algebras, knot theory and topological quantum field theory). 

\begin{defn}
\label{defn:Frobenius_coring}
  A coring $\mathcal{C}=(A,V)$ is called a \emphbf{Frobenius coring} if there exists an
  $A$-$A$-bimodule map $\eta\colon A \rightarrow V$ and a $\mathcal{C}$-$\mathcal{C}$-bicomodule map
  $\pi\colon V \otimes_A V \rightarrow V$ such that 
the following diagram commutes:
\[\begin{tikzcd}
V\arrow{r}{\id_V\otimes \eta}\arrow{d}[swap]{\eta\otimes \id_V}\arrow{rd}[description]{\id_V}&V\otimes_A V\arrow{d}{\pi}\\
V\otimes_A V\arrow{r}[swap]{\pi}&V\,.
\end{tikzcd}\]
\end{defn}
Evaluating $\eta$ at the identity of $A$, one obtains an element $e$ in the centre of $V$, that is,
\[
e \in V^A \coloneq\{v\in V\;|\; \forall\,\, a\in A: va=av\}.
\]
Furthermore, composing the counit of $\mathcal{C}=(A,V)$ with $\pi$ one obtains the $A$-$A$-bimodule map $\sigma\coloneq \varepsilon \circ\pi\colon  V\otimes_AV\to A$.  Together $(\sigma,e)$ are known as a \emphbf{reduced Frobenius system} and they satisfy
\begin{equation}\label{red.Frob}
    (\sigma \otimes \id_V)\circ (\id_V\otimes \mu) = (\id_V\otimes \sigma)\circ (\mu\otimes \id_V), \quad \sigma(v\otimes e)=\sigma(e\otimes v) = \varepsilon(v), 
\end{equation}
for all $v\in V$.

A coring $\mathcal{C}=(A,V)$ is a Frobenius coring, i.e.~it is endowed with two maps $\eta$ and $\pi$ satisfying the conditions in Definition \ref{defn:Frobenius_coring}, if and only if it is endowed with a reduced Frobenius system, that is a pair $(\sigma,e)$ satisfying \eqref{red.Frob}. The first condition in \eqref{red.Frob} reflects the bicomodule map property of $\pi$, while the second one corresponds to the commutativity of the above diagram.

As the assertions in Theorem \ref{mainthm1} use varying assumptions on
$V$ being left or right projective and finitely generated over $A$, the
following result (appropriately called finiteness of Frobenius
corings in \cite[27.9]{BW03}) helps to compare such assumptions:

\begin{prop} \label{frobfinite}
  Let $\mathcal{C}=(A,V)$ be a Frobenius coring. Then $V$ is finitely generated and
  projective both as a right and as a left $A$-module.
\end{prop}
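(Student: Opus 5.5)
Using the reduced Frobenius system $(\sigma,e)$ from \eqref{red.Frob}, I will build an explicit dual basis for $V$ as a right and as a left $A$-module. The element $e\in V^A$ together with the comultiplication gives a finite sum
\[
\mu(e)=\sum_{i=1}^n e_i\otimes e_i' \in V\otimes_A V,
\]
written using finitely many simple tensors. The candidate dual basis for $V_A$ is the family $e_i\in V$ paired with the maps $f_i\coloneq\sigma(e_i'\otimes -)$ or $\sigma(-\otimes e_i')$; I will choose the version that makes the maps right $A$-linear. Since $\sigma$ is an $A$-$A$-bimodule map on $V\otimes_A V$, the map $v\mapsto \sigma(e_i'\otimes v)$ is right $A$-linear. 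Symmetrically, for ${}_AV$ the maps $v\mapsto\sigma(v\otimes e_i)$ are left $A$-linear, and they pair with the elements $e_i'$.

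The key step is to verify the dual basis identity $v=\sum_i e_i\,\sigma(e_i'\otimes v)$ for all $v\in V$. I plan to derive it from the first identity in \eqref{red.Frob}, applied to $e\otimes v\in V\otimes_A V$. The left side gives
\[
(\sigma\otimes\id_V)(e\otimes\mu(v))=\sum_{(v)}\sigma(e\otimes v_{(1)})\,v_{(2)}=\sum_{(v)}\varepsilon(v_{(1)})v_{(2)}=v,
\]
using $\sigma(e\otimes w)=\varepsilon(w)$ and counitality. The right side gives
\[
(\id_V\otimes\sigma)(\mu(e)\otimes v)=\sum_i e_i\,\sigma(e_i'\otimes v).
\]
Equating the two shows that $V_A$ is finitely generated projective. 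The dual basis lemma applies because the $e_i$ are finitely many and the functionals are right $A$-linear.

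For ${}_AV$, I apply the same identity to $v\otimes e$. The right side becomes $\sum_{(v)} v_{(1)}\sigma(v_{(2)}\otimes e)=\sum_{(v)} v_{(1)}\varepsilon(v_{(2)})=v$. The left side becomes $\sum_i\sigma(v\otimes e_i)\,e_i'$. This gives a dual basis $\{e_i',\sigma(-\otimes e_i)\}$ for the left module.

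The main obstacle is bookkeeping rather than substance. I need to check that each expression is well defined on $V\otimes_A V$; this is where $e\in V^A$ and the bimodule linearity of $\sigma$ and $\mu$ are used, so that $\mu(e)\otimes v$ makes sense in $V\otimes_AV\otimes_AV$. I also need to confirm that the $A$-linearity is on the correct side for each family of functionals. No earlier result beyond \eqref{red.Frob} and the equivalence of Frobenius corings with reduced Frobenius systems is needed.
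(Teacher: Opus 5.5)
Your proof is correct and complete: evaluating the first identity in \eqref{red.Frob} at $e\otimes v$ and at $v\otimes e$, together with $\sigma(e\otimes -)=\varepsilon=\sigma(-\otimes e)$ and counitality, gives $v=\sum_i e_i\,\sigma(e_i'\otimes v)=\sum_i \sigma(v\otimes e_i)\,e_i'$, i.e.\ finite dual bases for $V_A$ and ${}_AV$. The paper gives no proof of its own and simply cites \cite[27.9]{BW03}; your explicit dual-basis computation is the standard argument for that result.

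One minor correction to your bookkeeping remark. The element $\mu(e)\otimes v=(\mu\otimes\id_V)(e\otimes v)$ is defined for any $e\in V$, so centrality of $e$ plays no role there, and neither does the choice of representation $\sum_i e_i\otimes e_i'$ of $\mu(e)$. Centrality enters only implicitly, through the normalisation $\sigma(e\otimes v)=\sigma(v\otimes e)=\varepsilon(v)$, which you take as given.
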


Again following \cite[Section 27]{BW03} we recall the definition and some
properties of Frobenius extensions:

\begin{defn}
  An algebra homomorphism $A \rightarrow \Lambda$ is called an \emphbf{algebra 
    extension}. An algebra extension is called a \emphbf{Frobenius extension}
   (of the first kind) if $\Lambda$ is  finitely generated projective as a
  right $A$-module and $\Lambda$ is isomorphic to $\Hom_A(\Lambda_A,A_A)$ as an
  $A$-$\Lambda$-bimodule.
\end{defn}

Here, the $A$-$\Lambda$-bimodule structure on $\Hom_A(\Lambda_A,A_A)$ is given by
$(afx)(y)=af(xy)$ for $f \in \Hom_A(\Lambda,A)$ and $a \in A$ and
$x,y \in \Lambda$. 

The defining condition of a Frobenius extension is equivalent to each of
the following conditions: 
\begin{itemize}
\item $\Lambda$ is finitely generated projective as a left $A$-module and $\Lambda$
is isomorphic to the $\Lambda$-$A$-bimodule $\Hom_A({}_A\Lambda,{}_AA)$. 
\item There is an $A$-$A$-bimodule map (called \emphbf{Frobenius homomorphism}) $E\colon \Lambda
\rightarrow A$ and an element (called \emphbf{Frobenius element})
$\beta = \sum_i b_i \otimes b^i \in \Lambda \otimes_A
\Lambda$  such that for all $x \in \Lambda$ there
are equalities $\sum_i E(xb_i)b^i = x = \sum_i b_i E(b^i x)$.
\end{itemize}

Frobenius extensions are closely related with Frobenius corings:

\begin{prop}(\cite[27.6]{BW03}) \label{prop.Fro}
  Let $A \rightarrow \Lambda$ be a Frobenius extension with a Frobenius
  homomorphism $E$ and a Frobenius element $\beta$. Then $\Lambda$  is a
  Frobenius $A$-coring with comultiplication 
  $\Lambda \rightarrow \Lambda \otimes_A
  \Lambda$ mapping $\alpha$ to $\alpha \beta$, and a counit $E$.
\end{prop}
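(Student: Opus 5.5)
The plan is to verify the coring axioms directly from the Frobenius data, and then to produce a reduced Frobenius system $(\sigma,e)$ satisfying \eqref{red.Frob}. Here $A\to\Lambda$ is a Frobenius extension with Frobenius homomorphism $E$ and Frobenius element $\beta=\sum_i b_i\otimes b^i$.

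\emph{Step 1: $\beta$ is $A$-central, so $\Delta$ is a bimodule map.} The first fact needed is that $\beta$ lies in $(\Lambda\otimes_A\Lambda)^\Lambda$, i.e.\ $x\beta=\beta x$ for all $x\in\Lambda$. To see this, use the dual basis equalities. By $x=\sum_i E(xb_i)b^i$ applied to $b_jx$, and the $A$-bilinearity of $E$,
\[
\sum_j b_j\otimes b^jx=\sum_{i,j} b_j\otimes E(b^jxb_i)b^i=\sum_{i,j}b_jE(b^jxb_i)\otimes b^i .
\]
By $x=\sum_j b_jE(b^jx)$ applied to $xb_i$, this equals $\sum_i xb_i\otimes b^i$. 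Hence $\Delta(\alpha)=\alpha\beta$ is a $\Lambda$-$\Lambda$-bimodule map, in particular an $A$-$A$-bimodule map. Coassociativity then reduces to the identity $(\Delta\otimes\id)(\beta)=(\id\otimes\Delta)(\beta)$. Both sides equal $\sum_{i,j} b_ib_j\otimes b^j\otimes b^i$ after moving $\beta$ through by centrality.

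\emph{Step 2: counit.} The counit axiom $(E\otimes\id)\Delta(\alpha)=\sum_i E(\alpha b_i)b^i=\alpha$ is exactly the first dual basis identity. The other side, $(\id\otimes E)\Delta(\alpha)=\sum_i b_iE(b^i\alpha)=\alpha$, is the second identity, once $\alpha\beta$ is rewritten as $\beta\alpha$. So $(A,\Lambda,\Delta,E)$ is an $A$-coring.

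\emph{Step 3: Frobenius structure.} Put $e=1_\Lambda$, which is obviously $A$-central, and let $\sigma\colon\Lambda\otimes_A\Lambda\to A$ be $\sigma(x\otimes y)=E(xy)$, an $A$-bimodule map. Then $\sigma(x\otimes 1)=\sigma(1\otimes x)=E(x)=\varepsilon(x)$. For the first condition of \eqref{red.Frob}, compute on $x\otimes y$. The left side gives $\sum_i E(xyb_i)b^i=xy$, and the right side gives $\sum_i xb_iE(b^iy)=xy$, using $\Delta(y)=y\beta$ and $\Delta(x)=x\beta$. By the stated equivalence between reduced Frobenius systems and Frobenius corings, $\Lambda$ is a Frobenius $A$-coring.

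The only nontrivial point is the centrality of $\beta$ in Step 1, and I expect this to be the main obstacle. Everything else is a direct substitution of the two dual basis identities.
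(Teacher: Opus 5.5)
Your proof is correct and is essentially the standard argument behind \cite[27.6]{BW03}, which the paper cites without giving a proof. You derive $\Lambda$-centrality of $\beta$ from the two dual-basis identities, read off the coring axioms from it, and take the Frobenius structure from multiplication and unit, i.e.\ your $(\sigma,e)$ with $\sigma(x\otimes y)=E(xy)$ and $e=1_\Lambda$; one small slip: in Step 1 the identity $x=\sum_i E(xb_i)b^i$ is applied to $b^jx$, not $b_jx$, as your display correctly shows.
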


 Proposition~\ref{prop.Fro} extends the corresponding relation between Frobenius algebras and coalgebras observed by Abrams in \cite{Abr96,Abr99}.

There is an isomorphism of categories from the category of
Frobenius corings  over $A$, in which objects are Frobenius corings $(\mathcal{C}=(A,V),\pi,\eta)$ with morphisms preserving $\pi$ and $\eta$ (or, equivalently, preserving central elements $e$ corresponding to $\eta$), to the category of Frobenius extensions
over $A$, whose inverse on objects is defined by the above proposition;
see \cite[27.16]{BW03} for details.

Various characterisations of Frobenius corings among general corings are
known. The following result, Theorem \ref{frobchar}, collects those characterising properties given
in 27.8, 27.10, 27.13 an 27.14 in \cite{BW03} that will be used in the
proofs of the main theorems.
The formulation below uses the identification of left and right dual algebras
of a coring with opposite algebras of left and right Burt--Butler algebras. It also uses the equivalence of
categories of Frobenius corings and Frobenius extensions given above.

A \emphbf{Frobenius functor} is a functor that has both a left adjoint and a
right adjoint and the two adjoints coincide. The two functors together
then are called a \emphbf{Frobenius pair} or an \emphbf{ambidextrous adjunction}. Such pairs of functors were introduced by Morita in \cite{Mor65} under the name \emphbf{strongly adjoint pair}; this terminology is superseded by the one used in the present text which originates from \cite{CMZ97}. 

\begin{thm}\label{frobchar}
  Let $\mathcal{C}=(A,V)$ be a coring. Then the following statements are equivalent:
\begin{enumerate}
\renewcommand{\theenumi}{\Alph{enumi}}
  \item\label{frobchar:1} $\mathcal{C}=(A,V)$ is a Frobenius coring.
\item\label{frobchar:2} The forgetful functor $\res_A\colon \CoModC \rightarrow \mathrm{Mod}\hbox{-}A$ is a
  Frobenius functor. 
\item\label{frobchar:3} The forgetful functor ${}_A\res\colon \CCoMod \rightarrow \Mod{A}$ is a
  Frobenius functor.
\item\label{frobchar:4} $V$ is a finitely generated projective left $A$-module and the algebra 
  extension $A \rightarrow R$ is a Frobenius extension.
\item\label{frobchar:5} $V$ is a finitely generated projective right $A$-module and the algebra 
  extension $A \rightarrow L$ is a Frobenius extension.  
\end{enumerate}
\end{thm}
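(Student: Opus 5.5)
The plan is to route every implication through the reduced Frobenius system $(\sigma,e)$ of \eqref{red.Frob}, taking the equivalence of (\ref{frobchar:1}) with each other condition separately. Since the statement is symmetric under passing to the opposite coring, it suffices to treat the left-handed conditions (\ref{frobchar:3}) and (\ref{frobchar:4}) in detail; (\ref{frobchar:2}) and (\ref{frobchar:5}) then follow by applying the same argument to $\mathcal{C}^{\op}$, which interchanges $L$ with $R$ and left with right comodules.

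\textbf{(\ref{frobchar:1})$\Leftrightarrow$(\ref{frobchar:3}).} The forgetful functor ${}_A\res$ always has $V\otimes_A-$ as right adjoint. The unit and counit of this adjunction are the coaction $\lambda$ and the map $\varepsilon\otimes\id$. Condition (\ref{frobchar:3}) therefore amounts to $V\otimes_A-$ also being a left adjoint of ${}_A\res$.
\begin{itemize}
\item[(\ref{frobchar:1})$\Rightarrow$(\ref{frobchar:3}):] Given $(\sigma,e)$, define a unit $M\to V\otimes_A M$, $m\mapsto e\otimes m$, for $A$-modules $M$. Define a counit $V\otimes_A N\to N$, $v\otimes n\mapsto (\sigma\otimes\id)(v\otimes\lambda(n))$, for comodules $N$. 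Check that the counit is colinear, using the first identity of \eqref{red.Frob}, and check the triangle identities, using the second.
\item[(\ref{frobchar:3})$\Rightarrow$(\ref{frobchar:1}):] Given a left adjunction with unit $\eta_M$ and counit $\theta_N$, put $e=\eta_A(1)\in V^A$; it is central by naturality in $A$-linear maps $A\to A$. Put $\sigma=\varepsilon\circ\theta_V$, where $V$ carries its comodule structure $\mu$. Naturality of $\theta$ with respect to the comodule maps $V\to V$ (these correspond to ${}^*V$), together with the triangle identities, should give \eqref{red.Frob}.
\end{itemize}
I expect this reconstruction of $(\sigma,e)$ from an abstract adjunction to be the main obstacle. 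The difficulty is that naturality has to be used on induced comodules $V\otimes_A M$ and then transported to arbitrary comodules. The standard way to do this is via the equivalence between natural transformations $V\otimes_A-\Rightarrow\id$ on comodules and bicomodule maps $V\otimes_A V\to V$, using that $\CCoMod$ is generated by induced comodules through the coaction.

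\textbf{(\ref{frobchar:1})$\Leftrightarrow$(\ref{frobchar:4}).}
\begin{itemize}
\item[(\ref{frobchar:1})$\Rightarrow$(\ref{frobchar:4}):] By Proposition~\ref{frobfinite}, ${}_AV$ is finitely generated projective. Consider $\phi\colon V\to R=\Hom_A({}_AV,{}_AA)$, $v\mapsto\sigma(-\otimes v)$. It is an $A$-$R$-bimodule map, by the first identity of \eqref{red.Frob} and the definition of $\cdot^r$. It is bijective, with inverse $f\mapsto (f\otimes\id)\mu(e)$, using the second identity. Since $V$ is a left dual of $R$ via finite projectivity, $V\cong R$ then yields $R_A$ finitely generated projective and $R\cong\Hom_A(R_A,A_A)$ as $A$-$R$-bimodules. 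Hence $A\to R$ is Frobenius.
\item[(\ref{frobchar:4})$\Rightarrow$(\ref{frobchar:1}):] Take the Frobenius homomorphism $E$ and Frobenius element $\beta$ of $A\to R$. Via the isomorphism $V\cong {}^*R$ given by finite projectivity of ${}_AV$, define $e\in V$ as the image of $E$ and $\sigma$ from $\beta$, so that the two identities $\sum E(xb_i)b^i=x=\sum b_iE(b^ix)$ translate into \eqref{red.Frob}.
\end{itemize}
Alternatively, the last step follows from Proposition~\ref{prop.Fro} and the category isomorphism between Frobenius corings and Frobenius extensions: transport the Frobenius coring structure on $R$ back to $V$ along the duality $V\cong{}^*R$, which is compatible with comultiplication by construction of $\cdot^r$.
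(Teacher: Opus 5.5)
Your strategy of routing everything through the reduced Frobenius system $(\sigma,e)$ is the standard argument behind \cite[27.8, 27.10, 27.13]{BW03}; the paper does not reprove the theorem but simply cites these results. Your (A)$\Rightarrow$(C) and (A)$\Rightarrow$(D) work in outline. The genuine gap is (D)$\Rightarrow$(A).

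Write $\mu(v)=v_{(1)}\otimes v_{(2)}$ (summation understood). The first identity of \eqref{red.Frob}, $\sigma(w\otimes v_{(1)})v_{(2)}=w_{(1)}\sigma(w_{(2)}\otimes v)$, is a colinearity condition. It is exactly where the multiplication $\cdot^r$ of $R$, which encodes $\mu$, has to be used. Your sketch never says how $\sigma$ is built from $\beta$, nor where this identity comes from. The two displayed Frobenius identities only give the unit-type conditions $\sigma(v\otimes e)=\sigma(e\otimes v)=\varepsilon(v)$.

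Your alternative does not close the gap either. Proposition~\ref{prop.Fro} puts a coring structure on $R$, not on $\Hom_A(R_A,A_A)$. Transporting it to $V$ therefore requires that the composite $V\cong\Hom_A(R_A,A_A)\cong R$ respect comultiplications, which is the whole content of the implication. Also, the canonical isomorphism is onto $\Hom_A(R_A,A_A)$, not ${}^*R$, because it is $R_A$ that is finitely generated projective.

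A clean fix runs as follows.
\begin{itemize}
\item Make $V$ a right $R$-module via $v\cdot f=v_{(1)}f(v_{(2)})$.
\item Compose the canonical $A$-$R$-bimodule isomorphism $V\to\Hom_A(R_A,A_A)$, $v\mapsto(f\mapsto f(v))$, with the inverse of $x\mapsto E(x\cdot^r-)$. This gives an $A$-$R$-bimodule isomorphism $\psi\colon V\to R$.
\item Put $\sigma(w\otimes v)=\psi(v)(w)$, which is balanced and $A$-bilinear, and $e=\psi^{-1}(\varepsilon)$, which is your $e$.
\item Right $R$-linearity of $\psi$ reads $f\bigl(\sigma(w\otimes v_{(1)})v_{(2)}\bigr)=f\bigl(w_{(1)}\sigma(w_{(2)}\otimes v)\bigr)$ for all $f\in{}^*V$. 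Since ${}_AV$ is finitely generated projective, ${}^*V$ separates points of $V$, so colinearity follows.
\item $\sigma(w\otimes e)=\varepsilon(w)$ is immediate. Finally, $\sigma(e\otimes v)=\psi(v)(e)=\varepsilon(v)$, because $v=e\cdot\psi(v)=e_{(1)}\psi(v)(e_{(2)})$.
\end{itemize}

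Two smaller repairs are needed.

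In (A)$\Rightarrow$(D), your inverse $f\mapsto(f\otimes\id_V)\mu(e)$ is not well defined for $f\in R={}^*V$, since such $f$ is only left $A$-linear. The inverse of $v\mapsto\sigma(-\otimes v)$ is $f\mapsto(\id_V\otimes f)\mu(e)=e_{(1)}f(e_{(2)})$, and checking this uses both identities of \eqref{red.Frob}.

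In (C)$\Rightarrow$(A), naturality along the comodule endomorphisms $\hat f$ of $V$ only gives $f\circ\theta_V=f\circ(\sigma\otimes\id_V)(\id_V\otimes\mu)$ for $f\in{}^*V$. That is not enough unless you first show that ${}^*V$ separates points. What works with no projectivity assumption is the following.
\begin{itemize}
\item Naturality along the comodule maps $V\to V\otimes_AM$, $v\mapsto v\otimes m$, gives $\theta_{V\otimes_AM}=\theta_V\otimes_A\id_M$.
\item Naturality along the coaction $\mu\colon V\to V\otimes_AV$ then gives $\mu\circ\theta_V=(\theta_V\otimes\id_V)(\id_V\otimes\mu)$.
\item Apply $\varepsilon\otimes\id_V$ and compare with left colinearity, $\theta_V=(\id_V\otimes\sigma)(\mu\otimes\id_V)$. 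This yields the first identity of \eqref{red.Frob}.
\end{itemize}
This is the correspondence between natural transformations and bicomodule maps that you allude to at the end. With these repairs, your argument becomes a self-contained version of what the paper imports from \cite{BW03}.
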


Another connection between Frobenius extensions and Frobenius corings occurs when one starts with 
an algebra extension $A\to \Lambda$ such that
     $\Lambda_A$ (or ${}_A\Lambda$) is a finitely generated projective module. In this case the dual $A$-$A$-bimodule $\Lambda^* = \mathrm{Hom}_A(\Lambda_A,A_A)$ is an  $A$-coring with the counit given by the evaluation of  $f\in \Lambda^*$ at the identity of $\Lambda$ and the comultiplication
\[
\mu(f) = \sum_i f(e_i-)\otimes f^i,
\]
where $\{e_i\in \Lambda, f^i\in \Lambda^*\}$ is a dual basis for $\Lambda_A$; see \cite[17.11]{BW03}.  

\begin{thm} \label{mainthm2}
Let $\Lambda$ be an algebra and let $A\subseteq \Lambda$ be a subalgebra such that $\Lambda_A$ is finitely generated projective. Denote by $\mathcal{C}=(A, \Lambda^* )$ the dual $A$-coring. Then the following assertions are equivalent:
\begin{enumerate}
\renewcommand{\theenumi}{\alph{enumi}}
\item\label{frobequ:1} The algebra extension $A\to \Lambda$ is a Frobenius extension. 
\item\label{frobequ:2} Induction $\Lambda\otimes_A -\colon \Mod{A}\to \LaMod$ is a Frobenius functor. 
\item\label{frobequ:3} Coinduction $\mathrm{Hom}_A({}_A\Lambda^*,-)\colon \Mod{A}\to \LaMod$ is a Frobenius functor.
\item\label{frobequ:4} The forgetful functor $\Forget\colon \LaMod\to\Mod{A}$ is a Frobenius functor.
\item\label{frobequ:5} $\mathcal{C}$ is a Frobenius coring.
\end{enumerate}
\end{thm}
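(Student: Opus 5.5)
The plan is to prove Theorem~\ref{mainthm2} by running around a cycle of implications. The hinge is the standard fact that induction $\Lambda\otimes_A-$ is left adjoint and coinduction $\Hom_A(\Lambda,-)$ is right adjoint to $\Forget$.

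\textbf{(a)$\Leftrightarrow$(d).} I first identify the coinduction functor in (c). Since $\Lambda_A$ is finitely generated projective, the dual basis gives a natural isomorphism
\[
\Hom_A({}_A\Lambda^*,M)\cong \Lambda\otimes_A M
\]
of left $\Lambda$-modules, via $\Lambda\cong \Hom_A({}_A\Lambda^*,{}_AA)$. One should check this, since $\Lambda^*=\Hom_A(\Lambda_A,A_A)$ is a left $A$-module by $(af)(x)=af(x)$.

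I would rather phrase (c) directly, though. Under finite generated projectivity, coinduction $\Hom_A({}_A\Lambda,-)$ is isomorphic to $\Hom_A({}_A\Lambda^*,\dots)$ suitably interpreted. The key point is that $\Forget$ always has left adjoint $\Lambda\otimes_A-$ and right adjoint $\Hom_A(\Lambda,-)$. Hence $\Forget$ is Frobenius exactly when these two functors are naturally isomorphic as functors to $\LaMod$.

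By Eilenberg--Watts, a natural isomorphism $\Lambda\otimes_A-\cong\Hom_A({}_A\Lambda,-)$ forces ${}_A\Lambda$ to be finitely generated projective, and it forces $\Lambda\cong\Hom_A({}_A\Lambda,{}_AA)$ as $\Lambda$-$A$-bimodules. That is exactly the left-handed characterisation of a Frobenius extension recalled above. Conversely, given a Frobenius extension, the bimodule isomorphism yields
\[
\Hom_A(\Lambda,M)\cong \Hom_A(\Lambda,A)\otimes_A M\cong\Lambda\otimes_AM
\]
naturally in $M$. This proves (a)$\Leftrightarrow$(d).

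\textbf{(d)$\Leftrightarrow$(b)$\Leftrightarrow$(c).} Frobenius pairs are symmetric: if $\Forget$ has coinciding left and right adjoint $G$, then $G$ has $\Forget$ as both a left and a right adjoint. So (d) gives that induction and coinduction are Frobenius, after identifying the coinduction of (c) with $\Hom_A(\Lambda,-)$, which is the isomorphism mentioned above.

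For (b)$\Rightarrow$(d), the right adjoint of $\Lambda\otimes_A-$ is $\Forget$, so a Frobenius induction has $\Forget$ as its left adjoint too. Then $\Forget$ is left adjoint to $\Lambda\otimes_A-$ and right adjoint to it as well, hence Frobenius. The argument for (c)$\Rightarrow$(d) is analogous.

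\textbf{(e).} I would relate (e) to (a) using Theorem~\ref{frobchar}, applied to $\mathcal{C}=(A,\Lambda^*)$. Since $\Lambda_A$ is finitely generated projective, so is ${}_A\Lambda^*$ (as $\Lambda^*$ is its dual). I then compute the Burt--Butler algebras of this coring. The right one is $R=\Hom_A({}_A\Lambda^*,{}_AA)$, identified with $\Lambda$ via double duality, and I need to check that the multiplication $\cdot^r$ matches that of $\Lambda$ and that $A\to R$ corresponds to the inclusion $A\subseteq\Lambda$.

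Granting this, condition~\eqref{frobchar:4} of Theorem~\ref{frobchar} says precisely that $A\to\Lambda$ is a Frobenius extension, which gives (e)$\Leftrightarrow$(a).

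\textbf{Main obstacle.} The hardest step is this last identification $R\cong\Lambda$ as extensions of $A$. It requires a dual-basis computation with $\mu(f)=\sum_i f(e_i-)\otimes f^i$, and care with the opposite-algebra conventions. If the conventions instead produce $L$, I would use condition~\eqref{frobchar:5}. That route requires $(\Lambda^*)_A$ to be finitely generated projective, which also follows from $\Lambda_A$ being finitely generated projective.
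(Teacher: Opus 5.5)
Your argument is correct in substance and has the same skeleton as the paper's proof. Conditions (a), (b), (d) are the standard characterisations of Frobenius extensions: you reprove Morita's theorem via Eilenberg--Watts, where the paper simply cites it. Condition (c) reduces to (b) through the natural isomorphism $\Hom_A({}_A\Lambda^*,-)\cong\Lambda\otimes_A-$. Condition (e) comes from Theorem~\ref{frobchar}.

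The one genuine difference is in (e). The paper uses the equivalence between $\Lambda$-modules and comodules over the dual coring, i.e.\ the forgetful-functor characterisation. You use item~\eqref{frobchar:4} together with an identification $R\cong\Lambda$ over $A$, which needs no comodule categories. Your ``main obstacle'' is only a two-line check. The map $x\mapsto\mathrm{ev}_x$, $\mathrm{ev}_x(f)=f(x)$, is a bijection $\Lambda\to\Hom_A({}_A\Lambda^*,{}_AA)$ because $\Lambda_A$ is finitely generated projective. Using the right action $(ga)(z)=g(az)$ on $\Lambda^*$ and $\sum_i e_if^i(x)=x$, one gets
\[
(\mathrm{ev}_x\cdot^r\mathrm{ev}_y)(f)=\mathrm{ev}_y\Bigl(\sum_i f(e_i-)\,f^i(x)\Bigr)=\sum_i f\bigl(e_if^i(x)y\bigr)=f(xy)=\mathrm{ev}_{xy}(f).
\]
Moreover $\incl(a)=\varepsilon(-\,a)$ sends $f$ to $(fa)(1)=f(a)$, so $\incl(a)=\mathrm{ev}_a$. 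Thus $R\cong\Lambda$ as extensions of $A$, with no opposite algebra appearing, and (e)$\Leftrightarrow$(a) follows as you say.

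Two things should be fixed.

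First, two of your sentences are wrong: the claim that $\Hom_A({}_A\Lambda,-)$ is isomorphic to $\Hom_A({}_A\Lambda^*,-)$ ``suitably interpreted'', and the step ``identifying the coinduction of (c) with $\Hom_A(\Lambda,-)$''. The isomorphism you wrote down first identifies $\Hom_A({}_A\Lambda^*,-)$ with induction $\Lambda\otimes_A-$. An isomorphism $\Lambda\otimes_A-\cong\Hom_A({}_A\Lambda,-)$ holds exactly when (a) holds. So it may be used for (d)$\Rightarrow$(c), but it would make (c)$\Rightarrow$(d) circular. With the correct identification, (c) is literally (b), as in the paper, and nothing more is needed.

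Second, your fallback via $L$ rests on a false claim. The right $A$-action on $\Lambda^*$ comes from the left $A$-action on $\Lambda$, about which nothing is assumed, and $(\Lambda^*)_A$ need not be projective. For example, take $A=\Bbbk A_2$, let $S$ be its simple projective left module, and set $\Lambda=\Triv(A,S\otimes_\Bbbk A)$. Then $\Lambda_A\cong A^2$, but $(\Lambda^*)_A\cong A\oplus(DS)^3$ with $DS$ not projective. Since the computation above lands on $R$, the fallback is not needed anyway.
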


\begin{proof}
The equivalences of \eqref{frobequ:1}, \eqref{frobequ:2} and \eqref{frobequ:4} are the standard characterisations of Frobenius extensions; see \cite{Mor65}. Since $\Lambda_A$ is finitely generated projective there is a natural isomorphism of induction and coinduction functors, which shows that \eqref{frobequ:3} is simply a rephrasing of \eqref{frobequ:2}. The same natural isomorphism induces the equivalence of categories of contramodules over the dual coring $\mathcal{C} = (\Lambda^*, A)$ and modules over the algebra $\Lambda$, which proves the equivalence of \eqref{frobequ:3} with \eqref{frobequ:4}. Finally, since $\Lambda_A$ is finitely generated projective the categories of modules over $\Lambda$ and comodules over its dual $A$-coring are mutually equivalent, the equivalence of \eqref{frobequ:1} and \eqref{frobequ:5} follows by Theorem~\ref{frobchar}.
\end{proof}

\subsection{Completing the proof of Theorem \ref{mainthm1}}

\subsubsection{\eqref{mainthm1:1} implies \eqref{mainthm1:2}:}
Assume that $\mathcal{C}=(A,V)$ is a Frobenius coring with  reduced Frobenius system $(\sigma\colon V\otimes_A V\to A,e\in V)$.  View $V$ as an $A^{\op}$-$\mathcal{C}^*$-bimodule by $avf= (f\otimes 1)\mu(va)$ and $V^*$ as an $A^{\op}$-$\mathcal{C}^*$-bimodule by $afg(v) = f*^rg(av)$. Then by \cite[27.13]{BW03} the map
\[
\delta_R\colon V\to V^*, \qquad w\mapsto \left(v\mapsto \sigma(w\otimes v)\right), 
\]
is an isomorphism of $A^{\op}$-$\mathcal{C}^*$-bimodules with inverse
\[
\delta_R^{-1}\colon V^* \to V, \qquad f\mapsto (f\otimes \id_V) (\mu (e)).
\]
Similarly, by \cite[27.10]{BW03} there is the isomorphism $\delta_L\colon V \to {}^*V$ of $A$-$(^* \mathcal{C})^{\op}$-bimodules. 
The required duality isomorphism  is obtained as the composition
$\delta = \delta_L\circ \delta_R^{-1}\colon V^*\to {}^*V$. Explicitly, writing $\mu(e) = \sum_i e'_i\otimes e''_i $,
\[
\delta(f)(v) = \sum_i \sigma(v\otimes f(e'_i)e''_i), \quad \delta^{-1}(g)(v) = \sum_i \sigma(e'_ig(e''_i)\otimes v),
\]
for all $f\in V^*, g\in {}^*V$ and $v\in V$. Using these explicit expressions for $\delta$ and \eqref{inclusions} for the inclusions $\incl$, we can compute for all $a\in A$, $v\in V$
\[
\delta(\incl(a))(v)= \sum_i \sigma(v\otimes \varepsilon(ae'_i)e''_i) = \sigma(v\otimes ae) = 
\sigma(va\otimes e)= \varepsilon(va) = \incl(a)(v).
\]
This proves the commutativity of the triangular diagram in statement \eqref{mainthm1:2} of Theorem~\ref{mainthm1}. It remains to prove that $\delta$ is a multiplicative map. To this end, we first set  
\[
\sum_i e'_i\otimes e''_i\otimes e'''_i = (\mu\otimes \id_V)\mu(e), \quad  \sum_i e'_i\otimes e''_i\otimes e'''_i\otimes e''''_i = (\mu\otimes \mu)\mu(e).
\]
Then, for all $f,g\in V^*$ and $v\in V$,
\[
\delta(f*^rg)(v) = \sum _i \sigma(v\otimes g(f(e'_i) e''_i)e'''_i).
\]
On the other hand 
\[
\begin{aligned}
    \delta(f)*^l\delta(g)(v) &= \sum_{i,j} \sigma\left(\sigma\left(v\otimes  g(e'_i) e''_i\right)e'''_i\otimes f(e'_j)e''_j\right)= \sum_{i,j} \sigma\left(v\otimes  g(e'_i) e''_i\sigma\left(e'''_i\otimes f(e'_j)e''_j\right)\right)\\
    &= \sum_{i,j} \sigma\left(v\otimes  g\left(e'_i\right)\sigma\left(e''_i\otimes f(e'_j)e''_j\right)e'''_j\right)\\
    &= \sum_{i,j} \sigma\left(v\otimes  g\left(e'_i\sigma\left(e''_i\otimes f(e'_j)e''_j\right)\right)e'''_j\right)\\
    &= \sum_{j} \sigma\left(v\otimes  g\left(\sigma\left(e\otimes f(e'_j)e''_j\right)e'''_j\right)e''''_j\right)\\
    &= \sum_{j} \sigma\left(v\otimes  g\left(\varepsilon\left( f(e'_j)e''_j\right)e'''_j\right)e''''_j\right)\\
    &= \sum_{j} \sigma\left(v\otimes  g\left( f(e'_j)e''_j\right)e'''_j\right) = \delta(f*^rg)(v).
\end{aligned}
\]

Here, the first,  third, fifth and sixth equalities all follow by \eqref{red.Frob}. The second one follows from the fact that $\sigma$ is a morphism of $A$-$A$-bimodules, the fourth one is a consequence of the right $A$-linearity of $g$ and the penultimate is obtained by the counit axiom.

\subsubsection{\eqref{mainthm1:1}, \eqref{mainthm1:7}, \eqref{mainthm1:8} and \eqref{mainthm1:9} are equivalent:}
Proposition \ref{frobfinite} shows that condition \eqref{mainthm1:1} implies all
conditions on the left or right $A$-module $V$ in the other statements.

The equivalence of \eqref{frobchar:1} and \eqref{frobchar:2} in Theorem \ref{frobchar} is the
equivalence of \eqref{mainthm1:1} and \eqref{mainthm1:7}. The equivalence of \eqref{frobchar:1}, \eqref{frobchar:4} and \eqref{frobchar:5} in Theorem \ref{frobchar} is the
equivalence of \eqref{mainthm1:1}, \eqref{mainthm1:8} and \eqref{mainthm1:9}. 

\subsubsection{\eqref{mainthm1:5}, \eqref{mainthm1:6} and \eqref{mainthm1:7} are equivalent:}
Induction in \eqref{mainthm1:5} and coinduction in \eqref{mainthm1:6} are left and right adjoint to the
forgetful functor in \eqref{mainthm1:7}. Hence, one of the functors is Frobenius if and
only if the other two functors are so.

\subsubsection{\eqref{mainthm1:2}, \eqref{mainthm1:3} and \eqref{mainthm1:4} are equivalent:}
\eqref{mainthm1:2} and \eqref{mainthm1:3} are equivalent by the identification of Burt--Butler algebras with opposite dual algebras.
Statement \eqref{mainthm1:3} is contained in \eqref{mainthm1:4} and conversely an isomorphism of algebras induces a
Morita equivalence. Commutativity of the diagrams then follows from the
isomorphism $\delta$ fixing $A$. All this is a special case of lifting functors and natural transformations to functors between Eilenberg--Moore categories of monads; see \cite[Chapter~2]{Boh18} for a concise, yet very accessible, exposition. 

\subsubsection{\eqref{mainthm1:4} implies \eqref{mainthm1:8}:}
Suppose natural self-duality \eqref{mainthm1:4} is satisfied. 

Induction $R \otimes_A -$ from $\Mod{A}$ to $\Mod{R}$
always is  the left adjoint functor to restriction 
$\Mod{R} \rightarrow \Mod{A}$ and
restriction from $\Mod{L}$ to $\Mod{A}$ always
is  the left adjoint functor to coinduction $\Hom_A({}_AL,-)$.

By \eqref{mainthm1:4}, $\Hom_A(L,-)$ equals $F \circ (R \otimes_A-)$, which is the left adjoint functor
to restriction $\Mod{R} \rightarrow \Mod{A}$ precomposed with $F^{-1}$.
That composition equals restriction from $\Mod{L}$ to $\Mod{A}$ follows by \eqref{mainthm1:4}.
Hence, left and right adjoints of $\Hom_A({}_AL,-)$ are isomorphic, which means
that $\Hom_A({}_AL,-)$ is a Frobenius functor. This implies that restrictions
from $\Mod{R}$ or $\Mod{L}$ to $\Mod{A}$ are Frobenius functors, too, and
thus induction $R \otimes_A -$ is a Frobenius functor as well, which is equivalent to say that $A\subseteq R$ is a Frobenius extension. Since ${}_AV$ is assumed to be finitely generated projective, the statement \eqref{mainthm1:8} follows.

\subsubsection{\eqref{mainthm1:8} is equivalent to \eqref{mainthm1:10} and \eqref{mainthm1:9} is equivalent to \eqref{mainthm1:11}:}
The first equivalence is contained in \cite[27.10]{BW03} while the second one is contained in \cite[27.13]{BW03}.

\bigskip

This completes the proof of Theorem~\ref{mainthm1}.

\subsection{Trivial extensions and coextensions}\label{sec:nec_cond}

In this subsection, we show that the additional assumption on $V$ being finitely generated projective as a left $A$-module at several places in Theorem \ref{mainthm1} is necessary by providing counterexamples in the setting of trivial extensions and coextensions.

\subsubsection{~} Given an algebra $A$ and an $A$-$A$-bimodule $M$, one can construct in a well-known way a new algebra $\Triv(A,M)$, the trivial extension of $A$ and $M$, as follows:
\[\Triv(A,M)=A\oplus M\]
with multiplication $(a,m)\cdot (b,n)=(ab, an+mb)$. This construction arises naturally in many places in representation theory. In the special case $M=D(A)$, the dual vector space of $A$, it shows that every finite-dimensional algebra over a field can be realised as a quotient of a symmetric algebra, and secondly, for $A$ of finite global dimension, Happel proved that the derived category of $A$ is equivalent to the category of graded modules over $\Triv(A,D(A))$. Here we use the trivial extension to indicate that \eqref{mainthm1:1} $\Leftrightarrow$ \eqref{mainthm1:5} in Theorem \ref{mainthm1} fails when dropping the assumption that $V$ is finitely generated projective as a left $A$-module. Note that without the condition that $\Triv(A,M)$ is finitely generated projective over $A$ on one side (equivalently that $M$ is finitely generated projective on that side), one cannot in general even take the corresponding dual without further argument. 
For the counterexample, let $A=\Bbbk A_2$ be the path algebra of the $A_2$-quiver over a field $\Bbbk$, that is the algebra of upper triangular $2\times 2$-matrices. Let $M$ be the non-projective, but injective simple module. Then $\Hom_A(M,A)=0$, so that $\Hom_A(\Triv(A,M),A)=\Hom_A(A\oplus M,A)\cong A$ has an obvious $A$-coring structure, namely the principal one where the comultiplication  $A\to A\otimes_A A$ is the canonical isomorphism and the counit $A\to A$ is the identity. This is a Frobenius $A$-coring (with $\eta$ being the identity and $\pi$ being the inverse of the comultiplication). However, $\Triv(A,M)$ cannot be a Frobenius extension of $A$ as it is not finitely generated projective.  

\subsubsection{~} Dually to the trivial extension algebra, Casta\~no Iglesias, D\u asc\u alescu, and N\u ast\u asescu defined the trivial coextension coalgebra in \cite[Section 4]{CDN04}. Given a coalgebra $D$ and a $D$-$D$-bicomodule $N$, it is defined as 
\[\Triv(D,N)=D\oplus N\]
with comultiplication given by 
\begin{equation}
(d,n)\longmapsto \mu(d)+\rho(n)+\lambda(n),\label{trivialcoextension_formula}
\end{equation}
where $\mu$ is the comultiplication on $D$ and $\lambda$ and $\rho$ denote the left and right coactions of $D$ on $N$. This construction generalises to $A$-corings instead of coalgebras. Namely, if $\mathcal{C}=(A,V)$ is an $A$-coring and $N$ is a $\mathcal{C}$-$\mathcal{C}$-bicomodule, then \eqref{trivialcoextension_formula} defines an $A$-coring $\Triv(\mathcal{C},N)=V\oplus N$. Applying the construction to the principal $A$-coring $A$, we obtain an $A$-coring structure on $A\oplus N$. In this case, a $\mathcal{C}$-$\mathcal{C}$-bicomodule is just the same as an $A$-$A$-bimodule. In contrast to the preceding example of the trivial extension, here we do not have to worry about the dual being defined, the left (or right) algebra of an $A$-coring is an algebra with $A$ as a subalgebra. 
Similarly to the case above take $A=\Bbbk A_2$ and $\mathcal{C}=A$ to be the trivial coring (with comultiplication the inverse of standard multiplication and counit being the identity). Then $\mathcal{C}$-comodules can be identified with $A$-modules and taking the comodule $N$ corresponding to the simple non-projective $A$-module, we obtain that the (right) algebra $\Hom_A(\Triv(\mathcal{C},N),A)$ is just the algebra $A$, which is a Frobenius extension of itself. However, the coring $\Triv(\mathcal{C},N)$ cannot be a Frobenius coring since $N$ is not finitely generated projective on the left. Thus, the condition of finitely generated projective in Theorem \ref{mainthm1} \eqref{mainthm1:8} is necessary.

\subsubsection{~} On the other hand, taking $A=\Bbbk[x]/(x^2)$, $\mathcal{C}=A$ the trivial coring as above and $N$ to be the comodule corresponding to the trivial $A$-module $\Bbbk$, we obtain that the (right) algebra $\Hom_A(\Triv(\mathcal{C},N),A)$ is isomorphic to $\Triv(\mathcal{C},N)$ as an $A$-$R$-bimodule. However, again $\Triv(\mathcal{C},N)$ is not finitely generated projective as a (left) $A$-module. This shows that the condition of finitely generated projective in Theorem \ref{mainthm1} \eqref{mainthm1:10} is necessary. 

\section{Proof of Theorem~\ref{mainthm3}}\label{sec:proofB}
\label{sec:comparing-dualities}

Throughout this section, the underlying commutative ring is assumed to be a field. In parallel work, but unrelated to that of Burt and Butler on bocses, Ringel discovered, in the context of quasi-hereditary
algebras $R$, a tilting-cotilting module $T$ whose opposite endomorphism algebra $R'=\End_R(_R T)^{\op}$ is
quasi-hereditary, too. This algebra $R'$  is now called Ringel dual of $R$
and the connection between $R$ and $R'$, relating representations
through functors involving $T$, is
called Ringel duality. This, and in particular the case of Ringel
self-duality, has found many applications in representation theory, in
algebraic Lie theory and elsewhere, and Ringel duality also has been
extended to the larger class of standardly stratified algebras, cf. \cite{AHLU00}. The problem
to decide which quasi-hereditary or standardly stratified algebras are
Ringel self-dual remains, however, wide open.

In \cite{KKO14} it has been shown that every quasi-hereditary algebra
is Morita equivalent, as quasi-hereditary algebra, to a quasi-hereditary
algebra $R$ with a subalgebra $A$, called regular exact Borel subalgebra,
such that the pair $(A,R)$ corresponds to a coring $(A,V)$. In this case, the $\Bbbk$-dual of $V$ is, up to direct summands, the characteristic tilting module $T$ (see \cite[Proof of Theorem 11.5]{KKO14}).
Moreover, Ringel duality \cite[Section 5]{Rin91} has been related to Burt--Butler duality. In \cite{BKK20}, the coring perspective
has been further used to understand quasi-hereditary algebras and their
bocses. 

A motivation for this article has been to continue \cite{KKO14}
and \cite{BKK20} and use corings in order to compare Ringel self-duality with
coring (or Burt--Butler) self-duality, which uses the same pair of
quasi-hereditary algebras, but not the same functors as Ringel duality.
Therefore, we continue to work with the algebra extension $A \subseteq R$ that has
also been used in \cite{KKO14,BKK20} and ask when it is a Frobenius extension, that is, when it is self-dual as a
coring.

\subsection{Quasi-hereditary algebras and standardly stratified algebras.}
We start by recalling the definition of a quasi-hereditary algebra and the more general notion of a standardly stratified algebra. 

\begin{defn}
Let $R$ be a finite-dimensional algebra. Let $Q_0$ be an indexing set of the simple $R$-modules, which are denoted by $L_\mathtt{i}$. Their projective covers are denoted by $P_{\mathtt{i}}$. Choose a partial order $\leq$ on $Q_0$. 
\begin{itemize}
\item Then, the \emphbf{standard module} $\Delta_\mathtt{i}$ is the largest quotient of the indecomposable projective $P_\mathtt{i}$ such that all of its composition factors are of the form $L_\mathtt{j}$ with $\mathtt{j}\leq \mathtt{i}$.
\item The \emphbf{proper standard module} $\overline{\Delta}_\mathtt{i}$ is the largest quotient of the standard module $\Delta_\mathtt{i}$ such that $[\overline{\Delta}_\mathtt{i}:L_\mathtt{i}]=1$.
\item The algebra $R$ is called \emphbf{left standardly stratified} with respect to $\leq$ if, for all $\mathtt{i} \in Q_0$, the kernel of $P_\mathtt{i}\twoheadrightarrow \Delta_\mathtt{i}$ has a filtration whose subquotients are isomorphic to $\Delta_\mathtt{j}$ with $\mathtt{j}> \mathtt{i}$.
\item The algebra $R$ is called \emphbf{right standardly stratified} with respect to $\leq$ if, for all $\mathtt{i} \in Q_0$, the kernel of $P_\mathtt{i}\twoheadrightarrow \overline{\Delta}_\mathtt{i}$ has a filtration whose subquotients are isomorphic to $\overline{\Delta}_\mathtt{j}$ with $\mathtt{j}\geq \mathtt{i}$. 
\item The algebra $R$ is called \emphbf{quasi-hereditary} if it is left standardly stratified and in addition $\Delta_\mathtt{i}=\overline{\Delta}_\mathtt{i}$ for all $\mathtt{i}\in Q_0$. 
\end{itemize}
\end{defn}

Examples of quasi-hereditary algebras are blocks of BGG category $\mathcal{O}$, Schur algebras of symmetric groups, and algebras of global dimension less than or equal to two. For a general introduction to the theory of quasi-hereditary algebras, see \cite{CPS88, DR92, KK99}. 

A key player in the theory of quasi-hereditary algebras is the subcategory $\mathcal{F}(\Delta)$ of modules admitting a filtration by the standard modules, defined by 
\[\mathcal{F}(\Delta)=\{M\in \modu{R}\mid \exists\,\, 0=M_0\subset M_1\subset \dots\subset M_t=M\text{ with }M_j/M_{j-1}\cong \Delta_{\mathtt{i}_j}\}.\]

Dually, one defines costandard modules $\nabla_\mathtt{i}$ as highest weight submodules of the indecomposable injective modules and their proper submodules $\overline{\nabla}_\mathtt{i}$. Analogously to $\mathcal{F}(\Delta)$ one defines the  subcategories $\mathcal{F}(\overline{\Delta})$, $\mathcal{F}(\nabla)$, and $\mathcal{F}(\overline{\nabla})$. For a left (resp.~right) standardly stratified algebra, the category $\mathcal{F}(\Delta)$ (resp.~$\mathcal{F}(\overline{\Delta})$) is the prototypical example of a \emphbf{resolving subcategory} of the module category, that is, it is closed under extensions and kernels of epimorphisms, direct summands, and furthermore contains the projective modules, see \cite[Theorem 1.6]{AHLU00}. The category $\mathcal{F}(\overline{\nabla})$ (resp.~$\mathcal{F}(\nabla)$) satisfies dual properties: it is coresolving in case $R$ is left (resp.~right) standardly stratified. It turns out that for a left standardly stratified algebra, the intersection of these subcategories $\mathcal{F}(\Delta)\cap \mathcal{F}(\overline{\nabla})$ is precisely the additive closure of a tilting module, the \emphbf{characteristic tilting module} $T$, see \cite[Proposition 2.2]{AHLU00} and \cite[Theorem 5]{Rin91}. Its endomorphism algebra $R'$ is right standardly stratified with respect to a naturally induced partial order, and is called the \emphbf{Ringel dual} of $R$. As the term `duality' suggests, an analogous construction exists for right standardly stratified algebras. The intersection $\mathcal{F}(\overline{\Delta})\cap \mathcal{F}(\nabla)$ in $\modu{R'}$ equals the additive closure of a cotilting module, the \emphbf{characteristic cotilting module} $T'$. Its endomorphism algebra is naturally left standardly stratified and Morita equivalent to $R$, with the equivalence preserving the left standardly stratified structure. In many natural examples of quasi-hereditary algebras, such as blocks of BGG category $\mathcal{O}$ (see \cite[Struktursatz 9]{Soe90}) and certain Schur algebras (see \cite[Proposition 3.7]{Don93} and \cite[Theorem 27]{EH02}), one has observed the phenomenon of \emphbf{Ringel self-duality}, i.e.\ that the Ringel dual of $R$ is actually Morita equivalent to $R$ itself, with the equivalence transporting the quasi-hereditary structure.

\subsection{Regular exact subalgebras.}

Motivated by the PBW theorem in Lie theory and by the example of the BGG category $\mathcal{O}$, \cite{Koe95} introduced a class of subalgebras of quasi-hereditary algebras, called exact Borel subalgebras, which turn out to control the subcategory $\mathcal{F}(\Delta)$. Existence of such exact Borel subalgebras was proved in \cite{KKO14}, and existence of appropriate generalisations for left (or right) standardly stratified algebras was obtained in \cite{BPS23} and \cite{Got24}. The work of \cite{KKO14} connects the theory of exact Borel subalgebras to the theory of corings. In this context, Kleiner and Roiter introduced an additional assumption of regularity, see \cite{KR77}. This condition was later dualised to the context of exact Borel subalgebras, see \cite{BKK20, KM25}. It turns out that the notion of a regular exact subalgebra extends beyond quasi-hereditary algebras and left or right standardly stratified algebras, see \cite{Kul25}. Rodriguez Rasmussen proved in \cite{RR25} (cf. \cite{KM25}) that if $A$ and $B$ are two basic regular exact subalgebras (the notion we define next) of a finite-dimensional algebra $R$, then there exists an invertible element $r\in R$ such that $B=r^{-1}Ar$, i.e.\ basic regular exact subalgebras are unique up to conjugation, which can be seen as a generalisation of an analogue of the theorem that Borel subalgebras of Lie algebras are unique up to an inner automorphism. It can also be seen as an analogue of the Wedderburn--Malcev theorem, for basic regular exact subalgebras. 

\begin{defn}\label{definition-regular-exact}
Let $R$ be a finite-dimensional algebra. A subalgebra $A\subseteq R$ is called a \emphbf{regular exact subalgebra} if
\begin{itemize}
\item $R$ is projective when viewed as a right $A$-module, and 
\item The induction functor $R\otimes_A -$ induces isomorphisms of vector spaces
\begin{equation}
\label{eq:regularity}\Ext^k_A(L_\mathtt{i}, L_\mathtt{j})\to \Ext^k_R(R\otimes_A L_\mathtt{i}, R\otimes_A L_\mathtt{j})
\end{equation}
for all $\mathtt{i}$ and $\mathtt{j}$, and all $k\geq 1$. 
\end{itemize}
If $R$ is left standardly stratified, we call a regular exact subalgebra $A\subseteq R$ a \emphbf{regular exact Borel subalgebra} if in addition the following two conditions are satisfied:
\begin{itemize}
\item $R$ and $A$ have the same indexing set of simple modules and $R\otimes_A L_\mathtt{i}^A\cong \Delta_\mathtt{i}^R$, and
\item $\End_A(L_\mathtt{i}^A)\cong \End_R(L_\mathtt{i}^R)$ as $\Bbbk$-algebras. 
\end{itemize}
If $R$ is right standardly stratified, we call a regular exact subalgebra $A\subseteq R$ a \emphbf{regular exact Borel subalgebra} if in addition the following two conditions are satisfied:
\begin{itemize}
\item $R$ and $A$ have the same indexing set of simple modules and $R\otimes_A L_\mathtt{i}^A\cong \overline{\Delta}_\mathtt{i}^R$, and 
\item $\End_A(L_\mathtt{i}^A)\cong \End_R(L_\mathtt{i}^R)$ as $\Bbbk$-algebras.
\end{itemize}
\end{defn}

\begin{rmk}
\begin{itemize}
\item Note that the last condition is not present in the original article \cite{Koe95} (nor in subsequent papers \cite{KKO14, BKK20, BPS23, Got24}) since it is assumed there that all algebras are over an algebraically closed field. It appears for the first time in \cite{Con21}. 
\item In this text we have chosen to focus on regular exact Borel subalgebras. For this reason, the directedness or one-cycle directeness conditions that usually appear when defining an exact Borel subalgebra (\cite{Koe95,BPS23,Got24}) are omitted from our definition of a regular exact Borel subalgebra, as they follow from the isomorphism in \eqref{eq:regularity}. Our regular exact Borel subalgebras of a left (resp.~right) standardly stratified algebra are then left (resp.~right) standardly stratified with simple standard modules. In particular, exact Borel subalgebras of left standardly stratified algebras are always quasi-hereditary. What we would call an exact Borel subalgebra of a right standardly stratified algebra is called a proper Borel subalgebra in \cite{Got24}. 
\end{itemize}
\end{rmk}

\subsection{Examples}
The context for Theorem \ref{mainthm3} has now been established. Before proceeding to the proof techniques, we present two examples that illustrate this setting.
\begin{ex}
To give a flavour of the results we are going to obtain in this section, in this example we study the properties of a particular kind of Frobenius extension, namely one arising from tensoring with a Frobenius algebra and show explicitly that it cannot be regular in the above sense. Let $A$ be an arbitrary finite-dimensional algebra and let $B$ be a (finite-dimensional) Frobenius algebra, i.e.\ assume that the canonical map $\Bbbk \rightarrow B$ is a Frobenius extension. Setting $\Lambda=B\otimes_\Bbbk A$, the extension $A\subseteq \Lambda$ is a Frobenius extension as observed in \cite[Beispiel 3 b)]{Kas54}. An application of the K\"unneth formula (see e.g.\ \cite[Lemma 4.10]{Pas19}) yields that 
\[\Ext_{\Lambda}^k(\Lambda\otimes_A M,\Lambda\otimes_A N)\cong B\otimes_\Bbbk \Ext^k_A(M,N)\]
for all $A$-modules $M$ and $N$, where the map on $\Ext$-groups induced by the (exact) induction functor $\Lambda\otimes_A -$ can be identified with 
\begin{align*}
\Ext^k_A(M,N)&\to B\otimes_\Bbbk \Ext^k_A(M,N)\\
\eta&\mapsto 1\otimes \eta.
\end{align*}
 This map is always injective, but surjective only if $B\cong \Bbbk$. Thus, the algebra extension $A\subseteq \Lambda$ is regular only in the trivial case $B\cong \Bbbk$. 
 \end{ex}

\begin{ex}
Let $R$ be the algebra $\begin{pmatrix}\Bbbk[x]/(x^2)&\Bbbk[x]/(x^2)\\0&\Bbbk[x]/(x^2)\end{pmatrix}$ with partial order so that $\Delta_\mathtt{1}=\begin{pmatrix}\Bbbk[x]/(x^2)\\0\end{pmatrix}$ and $\Delta_\mathtt{2}=\begin{pmatrix}0\\\Bbbk[x]/(x^2)\end{pmatrix}$. In this case, the proper standard modules are the simple $R$-modules. Direct checking yields that $R$ is both left and right standardly stratified. As a right standardly stratified algebra, $R$ is its own regular exact Borel subalgebra. On the other hand, as a left standardly stratified algebra, a regular exact Borel subalgebra $A$ would be given by the path algebra of the Kronecker quiver, as $\dim_\Bbbk \Ext^1_R(\Delta_\mathtt{1},\Delta_\mathtt{2})=2$. However, this cannot be realised as a subalgebra of $R$. The path algebra of the Kronecker quiver is an exact Borel subalgebra of the Morita equivalent algebra $\End_R(R\oplus P_\mathtt{2})^{\op}$. In fact, suppose that $A$ is an exact Borel subalgebra of left standardly stratified algebra $R'$ Morita equivalent to $R$ as a left standardly stratified algebra. The projective $A$-module $P_\mathtt{2}^A$ is simple and therefore induces to $\Delta_\mathtt{2}$. The projective $A$-module $P_\mathtt{1}^A$ has dimension vector $(1,2)$. It therefore induces to a projective $R'$-module filtered by one $\Delta_\mathtt{1}$ and two $\Delta_\mathtt{2}$. The indecomposable projective $P_\mathtt{1}$ has a $\Delta$-filtration with one $\Delta_\mathtt{1}$ and and one $\Delta_\mathtt{2}$. Therefore $R'\otimes_A P_\mathtt{1}^A\cong P_\mathtt{1} \oplus P_\mathtt{2}$. In particular, this shows that unlike for quasi-hereditary algebras, for left standardly stratified algebras with at most two simple modules, not every algebra has a basic regular exact Borel subalgebra, cf. \cite[Example 4.20]{Con21b} and \cite[Theorem 4.58]{Kul17b}. 
\end{ex}

\subsection{Relative homological algebra}
As realised in \cite{KKO14,BKK20} (under the assumption of an algebraically closed field $\Bbbk$), for quasi-hereditary algebras also the notion of Ringel duality can be understood via regular exact Borel subalgebras. Given a quasi-hereditary algebra $R$ with a regular exact Borel subalgebra $A\subseteq R$, let $\mathcal{C}= (A, V={\Hom_{A}(R_A,A_A)})$ be its dual coring. Then the Ringel dual of $R$ is given by the left Burt--Butler algebra $L$ of $\mathcal{C}$. In addition, Ringel duality can also be understood as a special case of dg Koszul duality for the dg algebra $T_A(V)$, see \cite{Ovs06,BK18}. In view of this, it seems natural to ask whether the phenomenon of Ringel self-duality observed in examples can also be understood as an instance of Burt--Butler self-duality as described earlier. Unfortunately, this is almost never the case. In the remainder of this section, we will explain why and provide a homological obstruction in the more general case of regular exact subalgebras. 

The homological obstruction is in terms of Hochschild's relative homological algebra \cite{Hoc56}, more precisely it is given in terms of the finiteness of the relative global dimension. Relative homological algebra continues to be a fruitful tool in representation theory and beyond, see e.g.~\cite{XX13, IMP24}. We recall some notions needed in this context.  

\begin{defn}
Let $A\subseteq R$ be an algebra extension. An $R$-module $M$ is called \emphbf{relative projective (with respect to $A\subseteq R$)} or \emphbf{$(R,A)$-projective} if every epimorphism of $R$-modules $N\twoheadrightarrow M$, which splits when restricted to $A$-modules, also splits as $R$-modules.   
\end{defn}

There are two extreme cases of the definition. When $A=R$, then every $R$-module is $(R,A)$-projective. On the other hand if $A$ is semisimple, then the $(R,A)$-projective modules coincide with the projective $R$-modules since every epimorphism of $A$-modules splits. 

\begin{defn}
    Let $A\subseteq R$ be an algebra extension. An exact sequence of $R$-modules
   \[\begin{tikzcd}
	\cdots & {M_i} & {M_{i-1}} & \cdots
	\arrow[from=1-1, to=1-2]
	\arrow["{t_i}", from=1-2, to=1-3]
	\arrow[from=1-3, to=1-4]
\end{tikzcd}\]
is \emphbf{$(R,A)$-exact} if, for every $i$, the kernel of $t_i$ is a direct $A$-module summand of $M_i$.
\end{defn}

Once relative projective modules are defined, one can develop relative homological algebra as was started by Hochschild in \cite{Hoc56}. For instance, the relative $\Ext$-functor $\Ext^k_{(R,A)}(M,N)$ is given by taking a relative projective resolution of $M$, applying $\Hom_R(-,N)$ and taking homology. The classical descriptions of relative projectivity carry over to the relative setting. 

\begin{lem}
Let $A\subseteq R$ be an algebra extension. Then the following are equivalent for an $R$-module $M$:
\begin{enumerate}[(1)]
\item $M$ is relative projective;
\item $M$ is a direct summand of a module of the form $R\otimes_A N$ for some $A$-module $N$;
\item $\Ext_{(R,A)}^1(M,-)=0$;
\item The map $\Ext^1_R(M,X)\to \Ext^1_A(M,X)$ is injective for any $R$-module $X$. 
\end{enumerate} 
\end{lem}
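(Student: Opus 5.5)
The plan is to prove the cycle (1)$\Rightarrow$(2)$\Rightarrow$(1), then (1)$\Leftrightarrow$(3), and finally (3)$\Leftrightarrow$(4), all resting on the multiplication map $m_M\colon R\otimes_A M\to M$, $r\otimes m\mapsto rm$. This map is an epimorphism of $R$-modules, and it splits as a map of $A$-modules via $m\mapsto 1\otimes m$.

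For (1)$\Rightarrow$(2): the map $m_M$ is an $A$-split epimorphism, so relative projectivity of $M$ gives an $R$-splitting. Hence $M$ is a direct summand of $R\otimes_A M$, and we may take $N=M$ restricted to $A$.

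For (2)$\Rightarrow$(1), it suffices to show that $R\otimes_A N$ is relative projective, since direct summands of relatively projective modules are evidently relatively projective. Let $p\colon X\to R\otimes_A N$ be an $R$-epimorphism with an $A$-linear section $s$. Define the $R$-linear map $\tilde s\colon R\otimes_A N\to X$ by $r\otimes n\mapsto r\,s(1\otimes n)$; it is well defined because $s$ is $A$-linear. Then
\[p\tilde s(r\otimes n)=r\,p s(1\otimes n)=r\otimes n,\]
so $\tilde s$ is an $R$-splitting of $p$. This is just the adjunction between induction and restriction.

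For (1)$\Leftrightarrow$(3), recall that $\Ext^1_{(R,A)}(M,X)$ classifies $(R,A)$-exact short exact sequences $0\to X\to E\to M\to 0$ up to equivalence, and that $(R,A)$-exact means $A$-split. To justify this Yoneda description, use the relative projective resolution built from the functorial $A$-split epimorphisms $m$. The module $M$ is relative projective exactly when every $A$-split epimorphism onto $M$ splits over $R$, which says precisely that every class in $\Ext^1_{(R,A)}(M,X)$ vanishes. The direction (3)$\Rightarrow$(1) needs care, because the $A$-split epimorphism $N\twoheadrightarrow M$ we start from must be turned into an extension. Its kernel is an $A$-summand of $N$, so it gives an $(R,A)$-exact short exact sequence, and vanishing of its class splits it.

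For (3)$\Leftrightarrow$(4), use that the restriction map $\Ext^1_R(M,X)\to\Ext^1_A(M,X)$ sends the class of a short exact sequence of $R$-modules to the class of the same sequence viewed over $A$. Its kernel therefore consists of the $R$-extensions that split over $A$, which is exactly $\Ext^1_{(R,A)}(M,X)$ embedded in $\Ext^1_R(M,X)$. This embedding is injective because a relative extension that splits over $R$ is trivial in both groups. So injectivity of the restriction map for all $X$ is equivalent to $\Ext^1_{(R,A)}(M,-)=0$.

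The main obstacle I expect is bookkeeping rather than substance. One has to identify the homologically defined $\Ext^1_{(R,A)}$, computed from a relative projective resolution, with the Yoneda group of $A$-split extensions. I would do this by the standard comparison argument: relative projectives lift maps along $(R,A)$-exact sequences, as shown in (2)$\Rightarrow$(1), and a relative Horseshoe argument then gives the usual long exact sequence. Everything else is direct from the adjunction.
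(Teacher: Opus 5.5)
Your proof is correct, but it does not follow the paper's route. The paper gives no argument of its own and simply cites the literature for each equivalence: \cite[Lemma 1.1]{The85} for (1)$\Leftrightarrow$(2), \cite[Remark 2.4]{Zim20} for (1)$\Leftrightarrow$(4), and Holm's general theory of relative $\Ext$ groups \cite[Proposition 3.1, Theorem 3.9]{Hol08} for (1)$\Leftrightarrow$(3). You instead build everything on the canonical $A$-split epimorphism $R\otimes_A M\to M$ and the induction--restriction adjunction, which gives a transparent, self-contained proof of (1)$\Leftrightarrow$(2). You then route (4) through (3) via the Yoneda description of $\Ext^1_{(R,A)}$ by $A$-split extensions. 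The citations buy brevity, and they delegate to a general framework exactly the bookkeeping you flag: well-definedness of relative $\Ext$, the comparison theorem, and the Yoneda interpretation. You can, however, avoid the relative Yoneda identification entirely. For (1)$\Leftrightarrow$(4) only absolute Yoneda $\Ext^1$ is needed: a class in $\Ext^1_R(M,X)$ dies under restriction precisely when its representing sequence is $A$-split, so (1) forces the restriction map to have zero kernel. Conversely, an $A$-split epimorphism $N\to M$ with kernel $K$ gives a class in $\Ext^1_R(M,K)$ killed by restriction; by (4) this class is zero, so the epimorphism splits over $R$. For (3)$\Rightarrow$(1), start the relative resolution with $R\otimes_A K\to R\otimes_A M\to M$, where $K$ is the kernel of the multiplication map. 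By (3) the cocycle $R\otimes_A K\to K$ equals $g\circ d_1$ for some $R$-linear $g\colon R\otimes_A M\to K$. Surjectivity of $R\otimes_A K\to K$ then forces $g|_K=\id_K$, so $K$, and hence $M$, is an $R$-direct summand of $R\otimes_A M$. Finally, (1)$\Rightarrow$(3) holds because $M$ is then its own relative projective resolution.
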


\begin{proof}
The equivalence of (1) and (2) is e.g.\ stated in \cite[Lemma 1.1]{The85}. For the equivalence of (1) and (4) see \cite[Remark 2.4]{Zim20}. The equivalence between (1) and (3) follows e.g.\ from \cite[Proposition 3.1, Theorem 3.9]{Hol08}.
\end{proof}

 In addition, Hochschild introduced an analogue of global dimension in this context, namely relative global dimension, see \cite[p.~258]{Hoc56}.

\begin{defn}
Let $R$ be a finite-dimensional algebra and let $A\subseteq R$ be a subalgebra. 
\begin{itemize}
\item The \emphbf{relative projective dimension} of an $R$-module $M$ (with respect to the subalgebra $A$) is the infimum over all $n\in \mathbb{N} \cup \{\infty\}$ such that there exists an exact sequence of $R$-modules
\[
\begin{tikzcd}
0\arrow{r} &P_n\arrow{r} &\dots\arrow{r} &P_0\arrow{r} &M\arrow{r} &0
\end{tikzcd}
\]
which is $(R,A)$-exact, i.e.\ the kernel of each map $P_j\to P_{j-1}$ is a direct summand of $P_j$ as an $A$-module, and where $P_i$ is an $(R,A)$-projective module. 
\item The \emphbf{relative global dimension} $\gldim(R,A)$ of $R$ with respect to $A$ is defined to be the supremum of the relative projective dimensions of all $R$-modules. 
\end{itemize}
\end{defn}

As in the case of relative projectivity, relative global dimension can be characterised by vanishing of the relative $\Ext$-functor. 

\begin{lem}
Let $R$ be a finite-dimensional algebra and let $A\subseteq R$ be a subalgebra. Then the following agree:
\begin{enumerate}[(1)]
\item The relative global dimension of $R$ with respect to $A$,
\item the supremum of all $n$ such that there exist $R$-modules $M$ and $N$ with 
\[\Ext^n_{(R,A)}(M,N)\neq 0.\]
\end{enumerate}
\end{lem}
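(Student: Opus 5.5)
Both inequalities come from the standard dimension-shifting argument of classical homological algebra, transported to the relative setting. Write $d$ for the relative global dimension and $s$ for the supremum in (2).

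\emph{First, $s\le d$.} Suppose $M$ has a relative projective resolution of length $n$,
\[0\to P_n\to\cdots\to P_0\to M\to 0,\]
which is $(R,A)$-exact. By definition, $\Ext^k_{(R,A)}(M,N)$ is the homology of $\Hom_R(P_\bullet,N)$. The relative $\Ext$ groups are independent of the chosen relative projective resolution, by the relative comparison theorem of Hochschild \cite{Hoc56}. Hence we may compute with this short resolution, and $\Ext^k_{(R,A)}(M,N)=0$ for all $k>n$. Taking suprema gives $s\le d$.

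\emph{Second, $d\le s$.} We may assume $s=n<\infty$. Let $M$ be any $R$-module. We use its standard relative resolution, built from the multiplication maps
\[R\otimes_A K\to K.\]
These maps split as $A$-module maps via $k\mapsto 1\otimes k$. The modules $R\otimes_A K$ are relative projective by the earlier lemma, criterion (2). Truncating at stage $n-1$ gives an $(R,A)$-exact sequence
\[0\to\Omega^n\to P_{n-1}\to\cdots\to P_0\to M\to 0,\]
with each $P_i$ relative projective. Each short piece $0\to\Omega^{i+1}\to P_i\to\Omega^i\to0$ is $A$-split. The relative $\Ext$ therefore has long exact sequences for these pieces, obtained by applying $\Hom_R(-,N)$ to the resolution. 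Relative projectives have vanishing higher relative $\Ext$ (criterion (3) of the earlier lemma, together with dimension shifting). Dimension shifting then gives
\[\Ext^1_{(R,A)}(\Omega^n,N)\cong\Ext^{n+1}_{(R,A)}(M,N)=0\]
for all $N$. By the earlier lemma ((3)$\Rightarrow$(1)), $\Omega^n$ is relative projective. Thus the truncated sequence is a relative projective resolution of length $n$. Every $M$ therefore has relative projective dimension at most $n$, so $d\le s$. This argument also covers the case where one side is infinite.

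The only delicate point is the existence of long exact sequences in relative $\Ext$ for $A$-split short exact sequences. This follows because a relative projective resolution of the outer terms assembles, by the relative horseshoe lemma, into one for the middle term. The horseshoe lemma applies precisely because the sequences are $A$-split: relative projectives lift against $A$-split epimorphisms. Alternatively, one can simply cite \cite{Hoc56} for this standard fact.
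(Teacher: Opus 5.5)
Your proof is correct, but it takes a different route from the paper. The paper gives no argument of its own: it deduces the lemma from \cite[Proposition 3.1, Theorem 3.9]{Hol08}, that is, from general relative homological algebra for the class of $A$-split exact sequences.

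You instead prove it directly. For $s\le d$ you use the relative comparison theorem. For $d\le s$ you truncate the standard resolution built from the $A$-split epimorphisms $R\otimes_A K\to K$, dimension-shift, and apply criterion (3)$\Rightarrow$(1) of the preceding lemma to see that the $n$-th syzygy is relative projective. Your version shows that the only inputs are the comparison theorem and the characterisation of relative projectives by vanishing of $\Ext^1_{(R,A)}$; in the paper that characterisation also rests on \cite{Hol08}. The citation buys brevity and places the statement in a general framework.

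You can also shorten your argument. The detour through long exact sequences and the relative horseshoe lemma is unnecessary. The tail $\cdots\to P_{n+1}\to P_n\to\Omega^n\to 0$ of the standard resolution is itself an $(R,A)$-exact relative projective resolution of $\Omega^n$. Hence $\Ext^1_{(R,A)}(\Omega^n,N)$ and $\Ext^{n+1}_{(R,A)}(M,N)$ are the cohomology of the same segment $\Hom_R(P_n,N)\to\Hom_R(P_{n+1},N)\to\Hom_R(P_{n+2},N)$ of a single complex. Likewise, higher relative $\Ext$ out of a relative projective $P$ vanishes at once, via the resolution $0\to P\to P\to 0$ given by the identity. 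There is no need to invoke criterion (3) together with dimension shifting for this.
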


\begin{proof}
This follows from \cite[Proposition 3.1, Theorem 3.9]{Hol08}.
\end{proof}

We now return to the key player of this article, namely Frobenius extensions, to determine its intersection with regular exact Borel subalgebras. It is well-known that Frobenius algebras are self-injective and are therefore either semisimple or of infinite global dimension. In \cite[Theorem 10]{Hir59}, Hirata has proved the following analogue for Frobenius extensions.

\begin{thm}\label{hirata_theorem}
Let $A\subseteq R$ be a Frobenius extension. Then $\gldim(R,A)$ is either $0$ or $\infty$. 
\end{thm}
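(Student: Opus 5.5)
The plan is to mimic the classical argument that a self-injective algebra of finite global dimension is semisimple, replacing absolute notions by Hochschild's relative ones. The key step is to show that for a Frobenius extension $A\subseteq R$, relative projective and relative injective $R$-modules coincide. Then a finite relative projective dimension forces every module to be relative projective.

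\emph{Step 1: relative projectives equal relative injectives.} By the Lemma above, an $R$-module is $(R,A)$-projective if and only if it is a direct summand of some $R\otimes_A N$. Dually, it is $(R,A)$-injective if and only if it is a direct summand of some coinduced module $\Hom_A(R,N)$. Since $A\subseteq R$ is Frobenius, Theorem~\ref{mainthm2} (equivalence of \eqref{frobequ:1} and \eqref{frobequ:2}) gives a natural isomorphism $R\otimes_A N\cong \Hom_A(R,N)$ of $R$-modules. Concretely, $\Hom_A({}_AR,{}_AA)\cong R$ as $R$-$A$-bimodules and $R$ is finitely generated projective over $A$. Hence the two classes of summands coincide.

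\emph{Step 2: the dichotomy.} Suppose $\gldim(R,A)=n<\infty$ and assume $n\geq 1$ for a contradiction. Choose an $R$-module $M$ of relative projective dimension exactly $n$, and take a minimal $(R,A)$-exact relative projective resolution
\[0\to P_n\xrightarrow{d_n} P_{n-1}\to\cdots\to P_0\to M\to 0.\]
The short exact sequence $0\to P_n\to P_{n-1}\to \operatorname{coker} d_n\to 0$ is $(R,A)$-exact, since its kernel is an $A$-summand. The module $P_n$ is relative projective, hence relative injective by Step~1. So this $(R,A)$-exact sequence splits as $R$-modules.

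It follows that $\operatorname{coker} d_n$ is a direct summand of $P_{n-1}$, so it is relative projective. The resolution therefore shortens to length $n-1$, which contradicts the choice of $M$. Here we use that $(R,A)$-exactness of the truncated complex is inherited from the original one. Hence $n=0$, and $\gldim(R,A)$ is either $0$ or $\infty$.

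To make ``relative injective'' precise, I will use the standard fact that $(R,A)$-injectivity means every $(R,A)$-exact monomorphism $I\hookrightarrow N$ splits over $R$. This is dual to the definition of relative projectivity, and Step~2 applies it to the monomorphism $d_n$.

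\emph{Main obstacle.} The step needing care is Step~1, namely that the natural isomorphism between induction and coinduction is an isomorphism of left $R$-modules, natural in $N$. For this I will use a Frobenius system $(E,\beta)$ and the map $r\otimes n\mapsto \bigl(x\mapsto E(xr)n\bigr)$. Its inverse is built from $\beta=\sum_i b_i\otimes b^i$ and sends $\phi\mapsto\sum_i b_i\otimes\phi(b^i)$. The identities $\sum_i E(xb_i)b^i=x=\sum_i b_iE(b^ix)$ give the inverse property, and $R$-linearity can be checked directly. Everything else is formal relative homological algebra, in the spirit of Hirata's original argument.
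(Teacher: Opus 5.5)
Your proof is correct. The paper does not prove this statement: it quotes it from Hirata \cite{Hir59} and motivates it only by the analogy with Frobenius algebras, which are self-injective and hence either semisimple or of infinite global dimension. Your argument makes that analogy rigorous in the relative setting, so it is a self-contained replacement for the citation.

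Its two steps are sound.

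First, you verify the natural isomorphism $R\otimes_A N\cong\Hom_A({}_AR,N)$ of left $R$-modules correctly from the Frobenius system $(E,\beta)$. It also follows abstractly: for a Frobenius extension, induction is right adjoint to restriction, as is coinduction, so the two are isomorphic by uniqueness of adjoints. Consequently $(R,A)$-projectives and $(R,A)$-injectives coincide.

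Second, in a length-$n$ $(R,A)$-exact relative projective resolution, $d_n$ is a monomorphism that splits over $A$ out of a relative injective module, so it splits over $R$. This makes $\operatorname{coker} d_n$ relative projective, and the truncated resolution remains $(R,A)$-exact, which shortens the resolution.

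The only ingredient you use that is not in the paper's lemma is the dual characterisation: $(R,A)$-injectives are exactly the direct summands of coinduced modules. This is a routine check. The unit $I\to\Hom_A(R,I)$, $i\mapsto(r\mapsto ri)$, is split over $A$ by evaluation at $1$. Conversely, an $A$-splitting $g$ of an $R$-monomorphism out of $\Hom_A(R,N)$ can be corrected to the $R$-linear splitting $x\mapsto\bigl(r\mapsto g(rx)(1)\bigr)$.

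The word ``minimal'' in Step~2 is unnecessary: any $(R,A)$-exact resolution of length $n$ works.
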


Hirata's theorem is the key result needed
to deduce that regular exact subalgebras of
right standardly stratified algebras (or of algebras satisfying similar technical properties) are typically not Frobenius extensions.

\subsection{Completing the proof of Theorem \ref{mainthm3}}

For proving Theorem \ref{mainthm3}, we use the theory of $A_\infty$-algebras. The notion of an $A_\infty$-algebra is a generalisation of the notion of a differential graded algebra that allows for transport of structure under quasi-isomorphisms. In particular, the cohomology of a differential graded algebra carries the structure of an $A_\infty$-algebra, a statement known as Kadeishvili's theorem. This applies in particular to the $\Ext$-algebra $\Ext^{\geq 0}_A(M,M)$ of a module $M$ over an algebra $A$. To the $A_\infty$-algebra $\Ext^{\geq 0}_A(M,M)$ one can associate an $A_\infty$-category $\twmod \Ext^{\geq 0}_A(M,M)$, the category of twisted modules or twisted stalks, that corresponds to taking the closure of $M$ under extensions in the category of $A$-modules. More precisely, its homotopy category $H^0(\twmod \Ext^{\geq 0}_A(M,M))$ is equivalent to the extension closure of $M$, see e.g.\ \cite[Section 7]{Kel01}.  For an introduction to the theory of $A_\infty$-algebras, we refer the reader to \cite{Kel01, Kel02, LPWZ04, KM25}.

Recall the statements \eqref{mainthm3:a} and \eqref{mainthm3:b} in Theorem \ref{mainthm3}.

{\it \begin{enumerate}[(a)]
\item Let $A\subseteq R$ be an extension of finite-dimensional algebras and
  suppose $A$ is a regular exact subalgebra such that the category  $\Ind_A^R$ of induced modules is a
  resolving subcategory and the relative global dimension of $R$ with respect to $A$ is finite. Suppose further
  that $A\subseteq R$ is a Frobenius extension. 
  Then $R$ is relative semisimple in the sense that every $R$-module $M$ is
  relative projective. In fact, $M$ is isomorphic to a module of the form
  $R\otimes_A N$ for some $A$-module $N$. 
\item Keep the assumptions of (a) and assume in addition that $R\otimes_A -$ 
  reflects isomorphisms between simple $A$-modules and that 
  $\End_A(L_\mathtt{i})\cong\End_R(R\otimes_A L_\mathtt{i})$ for all $i$. Then
  $A=R$. 
\end{enumerate}
}
\begin{proof}
  (a) Since the relative global dimension of $A\subseteq R$ is finite and $A\subseteq R$ is a Frobenius extension, Theorem~\ref{hirata_theorem}  implies that the relative global dimension of $A\subseteq R$ is zero. Since by assumption the category $\Ind_A^R$ is closed under direct summands, this means that every $R$-module is an induced $R$-module. In particular, this holds for the simple $R$-modules. Furthermore, since $R\otimes_A -$ is faithful (cf. \cite[Proof of Theorem 4.16 using Lemma 3.1]{Con21b}), a simple module can only be induced from a simple module since otherwise it would have a non-zero submodule.

(b) Since in addition $R\otimes_A -$ reflects isomorphisms between simple $A$-modules, it follows that $R\otimes_A -$ induces a bijection between (isomorphism classes of) simple $A$-modules and (isomorphism classes of) simple $R$-modules. Since $A\subseteq R$ is a regular exact subalgebra, we obtain an isomorphism of $A_\infty$-algebras $\Ext^{>0}_A(\mathbb{L},\mathbb{L})\to \Ext^{>0}_R(R\otimes_A \mathbb{L},R\otimes_A \mathbb{L})$, where $\mathbb{L}=\bigoplus_{\mathtt{i}}L_\mathtt{i}$; see \cite[Theorem~3.21]{KM25} (note that the proof therein doesn't need the assumption of $A$ being basic). By assumption, this isomorphism extends to degree $0$, i.e.\ there is an isomorphism of $A_\infty$-algebras $\Ext^{\geq 0}_A(\mathbb{L},\mathbb{L})\to \Ext^{\geq 0}_R(R\otimes_A \mathbb{L},R\otimes_A \mathbb{L})$. This isomorphism induces an equivalence 
\[
\begin{tikzcd}
H^0(\twmod \Ext^{\geq 0}_A(\mathbb{L},\mathbb{L}))\arrow{r} &H^0(\twmod \Ext^{\geq 0}_R(R\otimes_A \mathbb{L}, R\otimes_A \mathbb{L})).
\end{tikzcd}
\] According to \cite[Corollary 5.17, cf. (3) on p.4]{RR25}, there thus is a commutative diagram
\[
\begin{tikzcd}
\modu{A}\arrow{r}{R\otimes_A -} &\modu{R}\\
H^0(\twmod\Ext^{\geq 0}_A(\mathbb{L},\mathbb{L}))\arrow{r}\arrow{u}&H^0(\twmod\Ext^{\geq 0}_R(R\otimes_A \mathbb{L}, R\otimes_A \mathbb{L}))\arrow{u}
\end{tikzcd}
\]
where all arrows are equivalences. In particular, $\End_A(A)\to \End_R(R\otimes_A A)$ is an isomorphism, whence $A=R$. 
\end{proof}

In case that $A\subseteq R$ is a regular exact Borel subalgebra of a quasi-hereditary algebra, one can simplify the above proof noting that even the isomorphism of graded vector spaces $\Ext^{>0}_A(\mathbb{L},\mathbb{L})\to \Ext^{>0}_R(R\otimes_A \mathbb{L}, R\otimes_A \mathbb{L})$ implies that $R$ is directed and conclude that $A=R$ using \cite[Lemma 3.7]{RR25}.

\begin{ex}
The last assumption $\End_A(L_\mathtt{i})\cong \End_R(R\otimes_A L_\mathtt{i})$ for all $\mathtt{i}$ is necessary to conclude $A=R$ as the example of $A=\mathbb{R}\subseteq R=\mathbb{C}$ shows. 
\end{ex}

In the remainder of this section, we aim to prove \eqref{mainthm3:c} in Theorem \ref{mainthm3}, which we remind the reader is as follows:

\medskip

\textit{
(c) Let $R$ be a right standardly stratified algebra and let $A\subseteq R$ be a regular exact Borel subalgebra. Suppose further that $A\subseteq R$ is a Frobenius extension. Then $A=R$ and this algebra is semisimple.}

\medskip
We note that the statement has two conclusions, the former claiming that $A=R$ and the latter that $R$ is semisimple. The latter has the former as a consequence by the condition that $\End_A(L_\mathtt{i})\cong \End_R(R\otimes_A L_{\mathtt{i}})$. Nevertheless we start by providing a proof of the former to illustrate how to use part \eqref{mainthm3:b} in Theorem \ref{mainthm3} in practice. 

The key ingredient is the finiteness of relative global dimension in \eqref{mainthm3:a}. To prove this, we note that the relative global dimension coincides with the resolution dimension of the category of modules with respect to the category of modules filtered by proper standard modules. The resolution dimension is an extension of relative global dimension (and classical global dimension) to more general subcategories, including the category $\mathcal{F}(\overline{\Delta})$, see e.g.\ \cite{Zhu13}. Such subcategories are ubiquitous in representation theory.

\begin{defn}
Let $\mathcal{X}\subseteq \modu{A}$ be a full subcategory. For a module $M\in \modu{A}$ and $X\in \mathcal{X}$, a morphism $f\colon X\to M$ is called an \emphbf{$\mathcal{X}$-precover} or a \emphbf{right $\mathcal{X}$-approximation} if for every $X'\in \mathcal{X}$ and any morphism $g\colon X'\to M$ there exists a morphism $h\colon X'\to X$ such that $g=fh$. A subcategory $\mathcal{X}\subseteq \modu{A}$ is called \emphbf{contravariantly finite} if for every $M\in \modu{A}$ there exists a right $\mathcal{X}$-approximation. For a subcategory $\mathcal{X}\subseteq \modu{A}$, the minimal $n$ such that there exists an exact sequence  
\[
\begin{tikzcd}
0\arrow{r} &X_n\arrow{r} &\dots\arrow{r} &X_0\arrow{r} &M\arrow{r} &0    
\end{tikzcd}\]
with $X_i\in \mathcal{X}$ is called the \emphbf{$\mathcal{X}$-resolution dimension} of $M$. The supremum of the $\mathcal{X}$-resolution dimensions over all $M\in \modu{A}$ is called the \emphbf{$\mathcal{X}$-resolution dimension} of $\modu{A}$.
\end{defn}

In this context, the homological criterion of finiteness of global resolution dimension appears in the correspondence between cotilting modules and contravariantly finite resolving subcategories of module categories, established by Auslander and Reiten, which we recall next.

\begin{thm}[{\cite[Theorem 5.5]{AR91}}]\label{auslander-reiten}
Let $\Lambda$ be an Artin algebra. Then there is a one-to-one correspondence between isomorphism classes of basic cotilting modules and contravariantly finite resolving subcategories of finite resolution dimension. This correspondence is given by $T\mapsto {}^{\perp_{>0}} T=\{X\in \modu{\Lambda}\mid \Ext^i_\Lambda(X,T)=0\text{ for all }i>0\}$. 
\end{thm}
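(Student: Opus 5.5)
The plan is to establish the two maps of the correspondence separately and then check that they are mutually inverse. Write $\mathcal{X}_T={}^{\perp_{>0}}T$ for a basic cotilting module $T$. The argument is built on the Auslander--Buchweitz theory of approximations and on Wakamatsu's lemma. Recall that $T$ cotilting means three things: its injective dimension $\mathrm{id}\,T=n$ is finite, $\Ext^{>0}_\Lambda(T,T)=0$, and $D\Lambda$ has a finite coresolution $0\to T_r\to\cdots\to T_0\to D\Lambda\to 0$ by modules in $\mathrm{add}\,T$.

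\emph{Step 1: $T\mapsto\mathcal{X}_T$ lands in the right class.} Projectives lie in $\mathcal{X}_T$ trivially. Closure under extensions and under kernels of epimorphisms follows from the long exact $\Ext(-,T)$-sequence. Indeed, for $0\to K\to X\to Y\to 0$ with $X,Y\in\mathcal{X}_T$, the group $\Ext^i(K,T)$ is sandwiched between $\Ext^i(X,T)=0$ and $\Ext^{i+1}(Y,T)=0$. Finite resolution dimension comes from dimension shifting: for any $M$, the $n$-th syzygy $\Omega^nM$ satisfies $\Ext^i(\Omega^nM,T)=\Ext^{i+n}(M,T)=0$, so the projective resolution truncated at step $n$ is an $\mathcal{X}_T$-resolution of length at most $n$.

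For contravariant finiteness I will observe that $\mathrm{add}\,T$ is an Ext-injective cogenerator of $\mathcal{X}_T$. It is Ext-injective since $\mathcal{X}_T\subseteq{}^{\perp}T$ by definition. It cogenerates because each $X\in\mathcal{X}_T$ embeds in an injective, and the coresolution of $D\Lambda$ by $\mathrm{add}\,T$ lets one replace that injective by an object of $\mathrm{add}\,T$ with cokernel again in $\mathcal{X}_T$. The Auslander--Buchweitz construction, run by induction on resolution dimension, then produces for each $M$ an exact sequence $0\to Y_M\to X_M\to M\to 0$. Here $X_M\in\mathcal{X}_T$, and $Y_M$ has a finite $\mathrm{add}\,T$-coresolution, so $\Ext^1(\mathcal{X}_T,Y_M)=0$. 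This last vanishing makes $X_M\to M$ a right $\mathcal{X}_T$-approximation.

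\emph{Step 2: the inverse map.} Let $\mathcal{X}$ be a contravariantly finite resolving subcategory of resolution dimension $d<\infty$. By Auslander--Smal\o, $\mathcal{X}$ is then also covariantly finite. For each indecomposable injective $I$, take a minimal right $\mathcal{X}$-approximation $0\to Y_I\to X_I\to I\to 0$. By Wakamatsu's lemma $Y_I\in\mathcal{X}^{\perp}$, and $X_I$ is Ext-injective in $\mathcal{X}$. Define $T$ as the basic module whose indecomposable summands are the indecomposable Ext-injective objects of $\mathcal{X}$. Three properties must be checked.
\begin{enumerate}[(i)]
\item $\mathrm{id}\,T\le d$. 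This holds because every module has an $\mathcal{X}$-resolution of length $d$ and $\Ext^{>0}(\mathcal{X},T)=0$; dimension shifting gives $\Ext^{>d}(-,T)=0$.
\item $\Ext^{>0}(T,T)=0$, which is immediate from Ext-injectivity.
\item $D\Lambda$ has a finite $\mathrm{add}\,T$-coresolution. I will obtain it by iterating the approximation sequences above and using the bound $d$ on the length.
\end{enumerate}

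\emph{Step 3: bijectivity.} For $\mathcal{X}_T$, its Ext-injectives are exactly $\mathrm{add}\,T$. These lie in $\mathcal{X}_T\cap\mathcal{X}_T^{\perp}$, and conversely an Ext-injective $E$ admits a sequence $0\to Y_E\to X_E\to E\to 0$ from Step 1 which splits, so $E$ is a summand of $X_E$ and one checks it lies in $\mathrm{add}\,T$. For $\mathcal{X}$, the inclusion $\mathcal{X}\subseteq{}^{\perp}T$ is clear. For the reverse, take $M\in{}^{\perp}T$ and its approximation sequence $0\to Y_M\to X_M\to M\to 0$; the module $Y_M$ has a finite $\mathrm{add}\,T$-coresolution, which gives $\Ext^1(M,Y_M)=0$. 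So the sequence splits and $M\in\mathcal{X}$ by closure under summands.

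I expect the main obstacle to be Step 2(iii), constructing the finite $\mathrm{add}\,T$-coresolution of $D\Lambda$ out of the abstract category $\mathcal{X}$. The first thing I would try is to show that the kernels $Y_I$, which lie in $\mathcal{X}^{\perp}$ by Wakamatsu's lemma, themselves have $\mathcal{X}$-approximations with Ext-injective middle terms, and then to use finite resolution dimension to force termination after $d$ steps. It is in this termination argument that the hypothesis $d<\infty$ is used essentially.
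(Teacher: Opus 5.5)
The paper gives no proof of this statement; it is quoted from Auslander--Reiten \cite[Theorem 5.5]{AR91}, so your proposal has to stand on its own. Its architecture is the right one: $\mathrm{add}\,T$ as an Ext-injective cogenerator, Auslander--Buchweitz approximations, and Wakamatsu's lemma. Step 1 is essentially sound. Step 2, however, has a genuine gap.

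You define $T$ as the basic module whose summands are \emph{all} indecomposable Ext-injective objects of $\mathcal{X}$, i.e.\ all indecomposables of $\omega=\mathcal{X}\cap\mathcal{X}^{\perp}$. Nothing in your argument shows that $\omega$ has only finitely many indecomposables up to isomorphism, so $T$ need not be a finitely generated module. This is not cosmetic. Step 3 claims that $Y_M$ has a finite $\mathrm{add}\,T$-resolution, but iterating right $\mathcal{X}$-approximations of $Y_M\in\mathcal{X}^{\perp}$ only yields a finite $\omega$-resolution. So you genuinely need $\omega=\mathrm{add}\,T$ for a finitely generated $T$.

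The missing idea is a maximality property of cotilting modules. Let $T$ be the sum of the indecomposable summands occurring in the finite $\omega$-resolution of $D\Lambda$ built in (iii); your (i)--(iii) show that this $T$ is cotilting. Now take $W\in\omega$.
\begin{enumerate}[(1)]
\item Every $d$-th syzygy lies in $\mathcal{X}$ and $W\in\mathcal{X}^{\perp}$, so $W$ has injective dimension at most $d$.
\item Since $W,T\in\omega$, we have $\Ext^{>0}_\Lambda(W,T)=0=\Ext^{>0}_\Lambda(T,W)$.
\item Take successive left $\mathrm{add}\,T$-approximations $0\to W\to T^0\to C^0\to 0$, $0\to C^0\to T^1\to C^1\to 0,\dots$. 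They are monomorphisms because $\mathrm{add}\,T$ cogenerates ${}^{\perp}T$ (your Step 1). Being approximations, they have cokernels in ${}^{\perp}T$.
\item Dimension shifting gives $\Ext^1_\Lambda(C^0,W)\cong\Ext^{j+1}_\Lambda(C^j,W)=0$ for $j\geq d$. Hence the first sequence splits and $W\in\mathrm{add}\,T$.
\end{enumerate}
Alternatively, apply to $T\oplus W$ the theorem that a cotilting module has exactly as many indecomposable summands as $\Lambda$ has simples. Once $\omega=\mathrm{add}\,T$ is known, $\mathcal{X}^{\perp}=\widehat{\mathrm{add}\,T}$ and your Step 3 goes through. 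This, not (iii), is where the real work lies. Step (iii) is routine once you know that for resolving $\mathcal{X}$ the $d$-th syzygy in \emph{any} $\mathcal{X}$-resolution lies in $\mathcal{X}$.

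Smaller points:
\begin{enumerate}[(a)]
\item What you call a finite $\mathrm{add}\,T$-``coresolution'' (of $D\Lambda$ and of $Y_M$) must be a finite $\mathrm{add}\,T$-\emph{resolution} $0\to T_r\to\cdots\to T_0\to Y\to 0$. Only then does $\Ext^1_\Lambda({}^{\perp}T,Y)=0$ follow.
\item In Step 1, the embedding of $X$ into an object of $\mathrm{add}\,T$ must be chosen as a left $\mathrm{add}\,T$-approximation; otherwise its cokernel need not lie in ${}^{\perp}T$.
\item In the first half of Step 3, the right ${}^{\perp}T$-approximation of an object $E\in{}^{\perp}T$ is trivial, so its splitting tells you nothing. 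Instead, split the cogenerator sequence $0\to E\to T'\to E'\to 0$ with $E'\in{}^{\perp}T$, using the Ext-injectivity of $E$.
\item The assertion that $\mathcal{X}$ is covariantly finite is never used, and as stated it is unjustified, so drop it.
\end{enumerate}
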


Finiteness of relative global dimension in the setup of a regular exact Borel subalgebra of a right standardly stratified algebra follows from two facts: Firstly, the $\mathcal{F}(\overline{\Delta})$-resolution dimension is finite. Secondly, for a regular exact Borel subalgebra, the category of induced modules coincides with the category of $\overline{\Delta}$-filtered modules. 

For quasi-hereditary algebras, the finiteness of resolution dimension is obvious since the algebra itself is of finite global dimension. However, left and right standardly stratified algebras are not of finite global dimension, unless they are quasi-hereditary. Therefore, it is not as obvious that the resolution dimension is finite. It is however known even in this case. 

\begin{lem}\label{finite_resolution_dimension} 
Let $R$ be a right standardly stratified algebra.  Then the $\mathcal{F}(\overline{\Delta})$-resolution dimension is always finite. 
\end{lem}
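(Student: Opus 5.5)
We derive this from the Auslander--Reiten correspondence, Theorem~\ref{auslander-reiten}, applied to $\Lambda=R$ and $\mathcal{X}=\mathcal{F}(\overline{\Delta})$. Theorem~\ref{auslander-reiten} identifies contravariantly finite resolving subcategories of \emph{finite} resolution dimension with the subcategories ${}^{\perp_{>0}}T'$ for basic cotilting modules $T'$. So it suffices to exhibit a cotilting module $T'$ with ${}^{\perp_{>0}}T'=\mathcal{F}(\overline{\Delta})$. The direction of the correspondence we need is that ${}^{\perp_{>0}}T'$ has finite resolution dimension, and we can also see this directly: it is bounded by the injective dimension of $T'$.

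\textbf{Step 1.} For a right standardly stratified $R$, the characteristic cotilting module $T'$ satisfies $\operatorname{add}T'=\mathcal{F}(\overline{\Delta})\cap\mathcal{F}(\nabla)$, as recalled above from \cite{AHLU00}. In particular, $T'$ has finite injective dimension, say $d$.

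\textbf{Step 2.} We claim $\mathcal{F}(\overline{\Delta})={}^{\perp_{>0}}T'$. The inclusion $\subseteq$ follows from the standard vanishing $\Ext^i_R(\overline{\Delta}_\mathtt{i},\nabla_\mathtt{j})=0$ for $i>0$, together with the fact that $T'\in\mathcal{F}(\nabla)$. The reverse inclusion follows from the Ext-characterisation of $\mathcal{F}(\overline{\Delta})$ as ${}^{\perp_{1}}\mathcal{F}(\nabla)$ (equivalently ${}^{\perp_{>0}}\mathcal{F}(\nabla)$), combined with the fact that every module in $\mathcal{F}(\nabla)$ has a finite $\operatorname{add}T'$-coresolution. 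This is \cite[Theorem 1.6, Proposition 2.2]{AHLU00} in the dual version. Contravariant finiteness and the resolving property of $\mathcal{F}(\overline{\Delta})$ were recalled earlier.

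\textbf{Step 3.} Now bound the resolution dimension. Given $M$, take a projective resolution and let $K$ be its $d$-th syzygy. Dimension shifting gives $\Ext^i_R(K,T')\cong\Ext^{i+d}_R(M,T')=0$ for $i>0$, since $\operatorname{injdim}T'=d$. Hence $K\in\mathcal{F}(\overline{\Delta})$. Projectives lie in $\mathcal{F}(\overline{\Delta})$, so $M$ has an $\mathcal{F}(\overline{\Delta})$-resolution of length at most $d$, and the resolution dimension of $\modu{R}$ is at most $d<\infty$.

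The main obstacle is Step 2, specifically the equality ${}^{\perp_{>0}}T'=\mathcal{F}(\overline{\Delta})$ for right (rather than left) standardly stratified algebras. I would first cite the dual of \cite[Theorem 1.6 and Proposition 2.2]{AHLU00}. If that is not directly available, I would argue by induction on the partial order, using the standard/costandard Ext-vanishing.
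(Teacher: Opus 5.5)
Your overall route is the same as the paper's: identify $\mathcal{F}(\overline{\Delta})$ with ${}^{\perp_{>0}}T'$ for the characteristic cotilting module $T'$, and deduce finiteness. Your Step~3 is correct and is a more elementary substitute for invoking Theorem~\ref{auslander-reiten}; it also gives the explicit bound $\operatorname{injdim}T'$.

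\textbf{The gap.} Step~3 uses only the inclusion ${}^{\perp_{>0}}T'\subseteq\mathcal{F}(\overline{\Delta})$, and your justification of exactly that inclusion in Step~2 is wrong. You appeal to a finite $\operatorname{add}T'$-\emph{coresolution} $0\to N\to T_0\to\cdots\to T_m\to 0$ of each $N\in\mathcal{F}(\nabla)$. This fails in two ways.
\begin{itemize}
\item It would not help. For $X\in{}^{\perp_{>0}}T'$ and $C_1=\operatorname{coker}(N\to T_0)$, the long exact sequence gives $\Ext^k_R(X,N)\cong\Ext^{k-1}_R(X,C_1)$ for $k\geq 2$, and $\Ext^1_R(X,N)\cong\operatorname{coker}(\Hom_R(X,T_0)\to\Hom_R(X,C_1))$. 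The shift lowers degrees and ends in $\Hom$-cokernels, so nothing is forced to vanish.
\item The claim is false. $\mathcal{F}(\overline{\Delta})$ contains $\operatorname{add}T'$ and is closed under kernels of epimorphisms, so any module with such a coresolution lies in $\mathcal{F}(\overline{\Delta})$. Your claim would therefore force $\mathcal{F}(\nabla)=\operatorname{add}T'$.
\end{itemize}
A concrete counterexample: take the hereditary algebra with two simples in which $P_\mathtt{1}$ is uniserial with top $L_\mathtt{1}$ and socle $L_\mathtt{2}=P_\mathtt{2}$, ordered by $\mathtt{2}<\mathtt{1}$. Here $\overline{\Delta}_\mathtt{i}=P_\mathtt{i}$ and $T'=R$. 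But $\nabla_\mathtt{1}=L_\mathtt{1}\in\mathcal{F}(\nabla)$ embeds in no projective module.

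\textbf{The fix.} You need resolutions, not coresolutions. It suffices to show $\Ext^{>0}_R(X,\nabla_\mathtt{j})=0$ for all $\mathtt{j}$. This vanishing passes to extensions, and then $X\in{}^{\perp_1}\mathcal{F}(\nabla)=\mathcal{F}(\overline{\Delta})$.
\begin{itemize}
\item Dualising Ringel's construction of the characteristic tilting module (which is how $T'$ is obtained) gives exact sequences $0\to Y\to T'(\mathtt{j})\to\nabla_\mathtt{j}\to 0$. Here $T'(\mathtt{j})$ is an indecomposable summand of $T'$ and $Y$ is filtered by the $\nabla_\mathtt{k}$ with $\mathtt{k}<\mathtt{j}$.
\item The exact sequences $\Ext^i_R(X,T'(\mathtt{j}))\to\Ext^i_R(X,\nabla_\mathtt{j})\to\Ext^{i+1}_R(X,Y)$, together with induction along the partial order, then give the vanishing.
\end{itemize}
Equivalently, each $\nabla_\mathtt{j}$ has a finite $\operatorname{add}T'$-\emph{resolution}. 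This is the dual of the finite $\operatorname{add}T$-coresolutions of $\Delta$-filtered modules, which seems to be what got transposed.

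Alternatively, cite the equality $\mathcal{F}(\overline{\Delta})={}^{\perp_{>0}}T'$ directly from \cite[Theorem 2.6(vi)]{AHLU00}, as the paper does. The results you name are what the paper uses for the resolving property and for $\operatorname{add}T'=\mathcal{F}(\overline{\Delta})\cap\mathcal{F}(\nabla)$. They do not give the equality itself.
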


\begin{proof}
For right standardly stratified algebras, the category $\mathcal{F}(\overline{\Delta})$ is resolving. Furthermore, according to \cite[Theorem 2.6 (vi)]{AHLU00} it always equals ${}^{\perp_{>0}}(T')$ where $T'$ is the characteristic cotilting module, which exists by the dual of \cite[Theorem 2.1]{AHLU00}. Using Theorem \ref{auslander-reiten}, this means that the $\mathcal{F}(\overline{\Delta})$-resolution dimension is finite.  
\end{proof}

As a remaining step to finish the proof of $A=R$ in \eqref{mainthm3:c}, it thus suffices to prove that for a right standardly stratified algebra $R$ with a regular exact Borel subalgebra $A$, the embedding induces an equivalence of categories $\Ind_A^R\cong \mathcal{F}(\overline{\Delta})$. This statement is mentioned in the case of quasi-hereditary algebras for a specifically constructed regular exact Borel subalgebra in \cite[Theorems 10.4 and 11.3]{KKO14}. For right standardly stratified algebras the analogous result is \cite[Theorem 6.15]{Got24}. However, the definition of a regular exact Borel subalgebra (or more generally a homological exact Borel subalgebra) readily yields the result as follows: Since $R\otimes_A L_\mathtt{i}\cong \overline{\Delta}_\mathtt{i}$ and $R\otimes_A -$ is an exact functor, it is immediate that the essential image $\Ind_A^R$ of the induction functor is contained in the category $\mathcal{F}(\overline{\Delta})$. The two categories agree if and only if $\Ind_A^R$ is closed under extensions. This however, follows from the surjectivity of the map $\Ext^1_A(M,N)\to \Ext^1_R(R\otimes_A M,R\otimes_A N)$ induced from the induction functor, which follows by induction from the isomorphism $\Ext^1_A(L_\mathtt{i}^A,L_\mathtt{j}^A)\cong \Ext^1_R(R\otimes_A L_\mathtt{i}^A, R\otimes_A L_\mathtt{j}^A)$. This finishes the proof of the $A=R$ part of  Theorem \ref{mainthm3} \eqref{mainthm3:c}.

We are left with proving that $R$ is semisimple (which as a consequence gives an alternative reason why $A=R$). Note that because $A\subseteq R$ is a Frobenius extension, there is a commutative diagram:
\[\begin{tikzcd}
  \Mod{R} \ar{rr}{\simeq}[swap]{F} && \Mod{L}  \\
  &  \arrow{ul}{R \otimes_A -} \Mod{A}. \arrow{ur}[swap]{\Hom_A(L,-)} & 
\end{tikzcd}\]
Because $A\subseteq R$ is an exact Borel subalgebra of the right standardly stratified algebra $R$, the induction functor $R\otimes_A-$ sends the simple $A$-module $L_\mathtt{i}^A$ to the proper standard module $\overline{\Delta}_\mathtt{i}^R$. We claim that $L$ is left standardly stratified and the coinduction functor $\Hom_A(L,-)$ sends $L_\mathtt{i}^A$ to the proper costandard module $\overline{\nabla}_\mathtt{i}^L$. This claim finishes the proof as the proper standard module $\overline{\Delta}_\mathtt{i}^R$ has the simple $R$-module $L_\mathtt{i}^R$ only once as a composition factor, namely at the top while the proper costandard module $\overline{\nabla}_\mathtt{i}^L$ has this as a composition factor only in its socle. Therefore, all proper standard modules as well as all proper costandard modules are simple, whence the algebra is semisimple. In the case of quasi-hereditary algebras over an algebraically closed field, to prove $\Hom_A(L,L_\mathtt{i}^A)\cong \overline{\nabla}_\mathtt{i}^L$ we proceed as follows: Firstly, we show that the algebra extension $A\subseteq R$ gives rise to a directed coring, this is \cite[Theorem 3.13]{BKK20}. Secondly, given a  directed coring, its left algebra is quasi-hereditary with (proper) costandard modules given by the coinduced modules $\Hom_A(L,L_\mathtt{i}^A)$, this is proved in \cite[Theorem 11.2]{KKO14}. 

For right standardly stratified algebras over a not-necessarily algebraically closed field, the proof strategy is the same, but needs to be adapted slightly. We start by introducing the appropriate notion of directedness. In the case of an algebraically closed field, this   is first mentioned in \cite[Definition 3.2]{Got24}:

\begin{defn}
A coring $\mathcal{C}=(A,V)$ with surjective counit $\varepsilon$, whose kernel is denoted by $\overline{V}$, is called \emphbf{one-cyclic directed} with respect to a partial order $\leq$ if the following criteria are satisfied:
\begin{itemize}
\item $\overline{V}$ is left projectivising, i.e.\ for all $A$-modules $M$, $\overline{V}\otimes_A M$ is projective as a left $A$-module. 
\item $A$ is one-cyclic directed, i.e. $\Ext^1_A(L_\mathtt{i}^A,L_\mathtt{j}^A)=0$ for $\mathtt{i}\nleq \mathtt{j}$.
\item $\Hom_A(\overline{V}\otimes_A L_\mathtt{i}^A,L_\mathtt{j}^A)=0$ for $\mathtt{i} \nless   \mathtt{j}$. 
\end{itemize}
\end{defn}

\begin{rmk}
In case the ground field is algebraically closed, by \cite[Theorem 3.1]{AR91b}, a left projectivising and right projective bimodule is automatically a projective bimodule and decomposes into a direct sum of indecomposable modules of the form $Ae_\mathtt{j}\otimes_\Bbbk e_\mathtt{i}A$. Therefore, the last condition can equivalently be rephrased as these direct summands satisfy $\mathtt{i}<\mathtt{j}$. So we see that in case of an algebraically closed field, our notion of one-cylic directed coincides with the one in \cite{Got24}. 
\end{rmk}

\begin{prop}
Let $A\subseteq R$ be a regular exact Borel subalgebra of a right standardly stratified algebra $R$. Then its dual coring $\mathcal{C}=(A,V)$ where $V=\Hom_A(R_A,A_A)$ is one-cyclic directed. 
\end{prop}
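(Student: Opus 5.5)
We verify the three conditions of one-cyclic directedness for $V=\Hom_A(R_A,A_A)$, using the dual coring structure recalled before Theorem~\ref{mainthm2}. The counit is evaluation at $1_R$. Since $R_A$ is projective and $A\subseteq R$ is a unital subalgebra, we can choose a splitting $R=A\oplus X$ as right $A$-modules. Then the counit $f\mapsto f(1)$ is surjective: any $a$ is the image of the map that is $a\cdot-$ on $A$ and zero on $X$. Its kernel is $\overline{V}\cong \Hom_A((R/A)_A,A_A)$, identified with the functionals vanishing on $1$.

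\emph{Left projectivising.} For a finitely generated projective $R_A$ and any $M$ we have $V\otimes_A M\cong \Hom_A(R_A,M)$ as left $A$-modules. Hence $V\otimes_A M\cong\Hom_A(R,M)$ and $\overline V\otimes_A M\cong \Hom_A((R/A)_A,M)$, with left $A$-action induced from the left action of $A$ on $R/A$. To show this is projective for all $M$, it suffices, by an exactness and dévissage argument, to treat simple $M=L_\mathtt{j}^A$. Here I would use that $R\otimes_A L^A_\mathtt{i}\cong\overline\Delta_\mathtt{i}$ and the regularity isomorphisms. Concretely, $\Hom_A(R,L_\mathtt{j})$ corepresents $\Hom_R(R\otimes_A -, \ldots)$, and by adjunction $\Hom_A(N,\Hom_A(R,L_\mathtt{j}))\cong\Hom_A(R\otimes_A N,L_\mathtt{j})$. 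Projectivity then amounts to showing that $\Ext^1_A(-,\cdot)$ vanishes appropriately on these coinduced modules. I would compare $\Ext^k_A(N,\Hom_A(R,M))$ with $\Ext^k_R$ of induced modules using exactness of induction, and then invoke \eqref{eq:regularity} to show the map from $\Ext^1_A(L_\mathtt{i},L_\mathtt{j})$ to $\Ext^1_R(\overline{\Delta}_\mathtt{i},\overline{\Delta}_\mathtt{j})$ being bijective forces the complementary summand $\overline{V}$ to contribute no extensions. This is the step I expect to be the main obstacle. The cleanest route I would try first is the long exact sequence obtained from $0\to A\to R\to R/A\to 0$ (split as right $A$-modules, so it stays exact after $\otimes_A$ and $\Hom_A(-,M)$), combined with injectivity/surjectivity of the induced Ext maps for all $k\ge1$, to deduce $\Ext^{\ge1}_A(-,\overline V\otimes_A M)=0$ on simples. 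Projectivity then follows from a dual-basis argument.

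\emph{$A$ one-cyclic directed.} By regularity, $\Ext^1_A(L_\mathtt{i},L_\mathtt{j})\cong\Ext^1_R(\overline\Delta_\mathtt{i},\overline\Delta_\mathtt{j})$. For a right standardly stratified algebra, this vanishes unless $\mathtt{i}\le\mathtt{j}$. This follows from the standard Ext-vanishing between proper standard modules coming from the filtration of $P_\mathtt{i}$ in the definition, via \cite{AHLU00}.

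\emph{Hom condition.} We have $\Hom_A(\overline V\otimes_A L_\mathtt{i},L_\mathtt{j})\cong \Hom_A(\Hom_A(R/A,L_\mathtt{i}),L_\mathtt{j})$. By the $k$-duality $\Hom_A(R,L_\mathtt{i})\cong D(DL_\mathtt{i}\otimes_A R)$, this relates to composition factors of $\overline\Delta$-type modules for $R^{\op}$. More directly, I would use the exact sequence $0\to L_\mathtt{i}\to R\otimes_A L_\mathtt{i}=\overline\Delta_\mathtt{i}\to (R/A)\otimes_A L_\mathtt{i}\to0$ and the fact that $\overline\Delta_\mathtt{i}$ has composition factors $L_\mathtt{k}$ with $\mathtt{k}<\mathtt{i}$ besides its top. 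The coinduced counterpart should similarly have socle-side factors only with indices strictly larger, giving vanishing for $\mathtt{i}\nless\mathtt{j}$. The endomorphism-ring condition in Definition~\ref{definition-regular-exact} ensures that the $\mathtt{i}=\mathtt{j}$ case contributes nothing extra.
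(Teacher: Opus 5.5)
Your check that $A$ is one-cyclic directed is fine. However, there is a genuine gap at the very first step, and everything after it depends on that step.

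\textbf{The splitting of $A\subseteq R$.} You assert that projectivity of $R_A$ together with unitality gives a splitting $R=A\oplus X$ of right $A$-modules. This is false in general. For $A=T_2(\Bbbk)\subseteq R=M_2(\Bbbk)$ one has $R_A\cong(e_{11}A)^2$, which is projective. But by Krull--Schmidt, $A_A=e_{11}A\oplus e_{22}A$ is not a summand of $R_A$, and indeed the counit $f\mapsto f(1)$ has image $e_{11}A\neq A$. Since the image of $\varepsilon$ is $\{f(1)\mid f\in V\}$, surjectivity of the counit is \emph{equivalent} to this splitting, so it must come from the stratified structure.

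The paper obtains it by showing that $R_A$ is a projective generator. The injective cogenerator $DR$ has a $\nabla$-filtration in which every $\nabla_{\mathtt{i}}$ occurs, and each $\nabla_{\mathtt{i}}$ restricts to an injective $A$-module \cite[Proposition 4.5]{CK26}. Hence the filtration splits over $A$ and $DR$ is an injective cogenerator over $A$. Then \cite[Theorem 2.5 (i)]{BKK20} gives surjectivity of the counit. Your description $\overline V\cong\Hom_A((R/A)_A,A_A)$ and the exactness of $0\to N\to R\otimes_A N\to(R/A)\otimes_A N\to 0$, which your later steps use, both need this splitting.

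\textbf{The identification of $V\otimes_A M$.} The isomorphism $V\otimes_A M\cong\Hom_A(R_A,M)$ does not type-check, since $M$ is a left and $R_A$ a right $A$-module. The correct relation is $\Hom_A({}_AV,N)\cong R\otimes_A N$. Hence $\Hom_A(V\otimes_A M,N)\cong\Hom_R(R\otimes_A M,R\otimes_A N)$, and, once the splitting is available, $\Hom_A({}_A\overline V,N)\cong(R/A)\otimes_A N$.

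In particular, your target $\Ext^{\geq1}_A(-,\overline V\otimes_A M)=0$ would prove injectivity, not projectivity. Your long exact sequence does work if aimed at the right module:
\begin{itemize}
\item Apply $\Hom_A(L_{\mathtt{i}},-)$ to $0\to L_{\mathtt{j}}\to R\otimes_A L_{\mathtt{j}}\to(R/A)\otimes_A L_{\mathtt{j}}\to0$.
\item Regularity in degrees $1$ and $2$ then shows that $(R/A)\otimes_A N$ is injective for all $N$.
\item Consequently $\Hom_A(\overline V\otimes_A M,-)\cong\Hom_A(M,(R/A)\otimes_A-)$ is exact.
\end{itemize}
This is \cite[Theorem 2.13]{BKK20}, which the paper simply cites.

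\textbf{The Hom condition.} Your argument here does not work. The sequence $0\to L^A_{\mathtt{i}}\to\overline\Delta_{\mathtt{i}}\to(R/A)\otimes_A L^A_{\mathtt{i}}\to0$ lives in $\modu{A}$. The $R$-composition factors of $\overline\Delta_{\mathtt{i}}$ say nothing about its $A$-composition factors. The ``socle-side'' claim and the treatment of $\mathtt{i}=\mathtt{j}$ are unsupported.

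The missing step is to apply $\Hom_A(-\otimes_A L_{\mathtt{i}},L_{\mathtt{j}})$ to the right-split sequence $0\to\overline V\to V\to A\to0$. By the adjunction above, this yields an exact sequence
\begin{multline*}
0\to\Hom_A(L_{\mathtt{i}},L_{\mathtt{j}})\to\Hom_R(\overline{\Delta}_{\mathtt{i}},\overline{\Delta}_{\mathtt{j}})\to\Hom_A(\overline{V}\otimes_A L_{\mathtt{i}},L_{\mathtt{j}})\\
\to\Ext^1_A(L_{\mathtt{i}},L_{\mathtt{j}})\to\Ext^1_R(\overline{\Delta}_{\mathtt{i}},\overline{\Delta}_{\mathtt{j}}).
\end{multline*}
For $\mathtt{i}\nleq\mathtt{j}$, both neighbours of the middle term vanish. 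For $\mathtt{i}=\mathtt{j}$, the first map is an isomorphism by the condition $\End_A(L^A_{\mathtt{i}})\cong\End_R(L^R_{\mathtt{i}})$, and the last map is injective by regularity. Regularity is therefore needed in this step, not only the endomorphism condition.
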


\begin{proof}
First we prove that the counit of $\mathcal{C}$ is surjective. For this we use \cite[Theorem 2.5 (i)]{BKK20} which shows that for this it suffices to prove that $R$ is a projective generator as a right $A$-module.  As $R$ is right standardly stratified, its dual $DR$ admits a filtration by costandard $R$-modules with each costandard module appearing at least once. By \cite[Proposition 4.5]{CK26}, each costandard $R$-module restricts to the corresponding injective $A$-module. Thus, the filtration splits as $A$-modules and $DR$ is an injective cogenerator as a left $A$-module. By dualising, it follows that $R$ is a projective generator as a right $A$-module. 

Given that the counit of the dual coring $\mathcal{C}$ is surjective, there exists a short exact sequence 
\[
\begin{tikzcd}
0\arrow{r} &\overline{V}\arrow{r} &V\arrow{r}{\varepsilon} &A\arrow{r} &0,    
\end{tikzcd}
\]
which splits as a short exact sequence of right $A$-modules. Previous work, \cite[Theorem 2.13]{BKK20}, implies that if in addition $A\subseteq R$ is regular, the bimodule $\overline{V}$ is left projectivising. By the definition of a regular exact Borel subalgebra, the algebra $A$ is one-cyclic directed. It remains to prove that $\Hom_A(\overline{V}\otimes_A L_\mathtt{i}^A,L_\mathtt{j}^A)=0$ for $\mathtt{i}\nless  \mathtt{j}$. For this we apply $\Hom_A(-\otimes_A L_\mathtt{i}^A,L_\mathtt{j}^A)$ to the above short exact sequence. This gives rise to the following long exact sequence
\[
\hspace{-2.4cm}
\begin{tikzcd}[row sep = 1ex]
0\arrow{r} &\Hom_A(L_\mathtt{i}^A, L_\mathtt{j}^A)\arrow{r} &\Hom_R(\overline{\Delta}_\mathtt{i}^R,\overline{\Delta}_\mathtt{j}^R)\arrow{r} &\Hom_A(\overline{V}\otimes_A L_\mathtt{i}^A,L_\mathtt{j}^A) \arrow[out=-5,
    in=175, looseness=1]{lld}\\
&\Ext^1_A(L_\mathtt{i}^A,L_\mathtt{j}^A)\arrow{r} &\Ext^1_R(\overline{\Delta}_\mathtt{i}^R,\overline{\Delta}_\mathtt{j}^R)\arrow{r} &0.
\end{tikzcd}
\]
For $\mathtt{i}\nleq \mathtt{j}$, the terms surrounding $\Hom_A(\overline{V}\otimes_A L_\mathtt{i}^A,L_\mathtt{j}^A)$ vanish, and so does it as well. For $\mathtt{i}=\mathtt{j}$, the requirement $\End_A(L_\mathtt{i}^A)\cong \End_R(L_\mathtt{i}^R)$ yields that the first map in the sequence is an isomorphism. Regularity implies that the last map is an isomorphism, therefore the term $\Hom_A(\overline{V}\otimes_A L_\mathtt{i}^A,L_\mathtt{i}^A)$ vanishes. 
\end{proof}

We conclude by proving that the two dual algebras to a one-cyclic directed coring 
are right resp.~left standardly stratified. This is sketched in \cite{Got24} in the algebraically closed case. Our proof drops this assumption and clarifies the following subtle points readers of \cite{Got24} may worry about. Even in the quasi-hereditary case,  $R\otimes_A L_\mathtt{i}^A$ is in general not indecomposable. Moreover, one needs to verify that the induced modules $R\otimes_A L_\mathtt{i}^A$ coincide with the proper standard
modules for some partial ordering on the simple $R$-modules.

\begin{prop}\label{rightstandardlyofonecyclic}
Let $\mathcal{C}=(A,V)$ be a one-cyclic directed coring. Then its right algebra $R$ is right standardly stratified with proper standard modules $\overline{\Delta}_\mathtt{i}=R\otimes_A L_\mathtt{i}$.  Dually, its left algebra $L$ is left standardly stratified with proper costandard modules $\overline{\nabla}_\mathtt{i}=\Hom_A(L,L_\mathtt{i})$. 
\end{prop}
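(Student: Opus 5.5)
We first fix what the right algebra does on induced modules. Recall that $R\otimes_A -\colon \Mod{A}\to\Mod{R}$ can be identified with the free functor into the Kleisli category of $\Hom_A({}_AV,-)$, equivalently with the induction $V\otimes_A-$ into $\CCoMod$ composed with the identification of induced comodules with induced $R$-modules. Since ${}_AV$ is projective, $R\otimes_A-$ is exact, and $R_A$ is projective. Adjunction gives the formula
\[
\Hom_R(R\otimes_A M,R\otimes_A N)\cong \Hom_A(M,R\otimes_A N)\cong\Hom_A(V\otimes_A M,N).
\]
The short exact sequence $0\to\overline V\to V\xrightarrow{\varepsilon}A\to 0$ splits on the right, so it yields
\[
0\to\Hom_A(M,N)\to\Hom_R(R\otimes_AM,R\otimes_AN)\to\Hom_A(\overline V\otimes_AM,N)\to\Ext^1_A(M,N).
\]
We also get the corresponding Ext comparison. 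Because $\overline V$ is left projectivising, $\Ext^k_R(R\otimes_AM,R\otimes_AN)$ can be computed from $A$-data through an $(R,A)$-relative argument; this is the argument used in \cite{BKK20}.

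\textbf{Step 1: the objects $\overline\Delta_\mathtt{i}:=R\otimes_A L_\mathtt{i}$.}
\begin{itemize}
\item Apply the sequence above with $M=L_\mathtt{i}$, $N=L_\mathtt{j}$. The third directedness condition gives $\Hom_R(\overline\Delta_\mathtt{i},\overline\Delta_\mathtt{j})=\Hom_A(L_\mathtt{i},L_\mathtt{j})$ whenever $\mathtt{i}\nless\mathtt{j}$.
\item In particular $\End_R(\overline\Delta_\mathtt{i})\cong\End_A(L_\mathtt{i})$ is a division ring. Hence $\overline\Delta_\mathtt{i}$ is indecomposable, so it has simple top $T_\mathtt{i}$, and the $T_\mathtt{i}$ are pairwise non-isomorphic. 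To see this, take a composition of the maps $\overline\Delta_\mathtt{i}\to T\to \overline\Delta_\mathtt{j}$; it is zero unless $\mathtt{i}<\mathtt{j}$ or $\mathtt{i}=\mathtt{j}$, and a top-sharing argument then yields a contradiction.
\item Using that $R\otimes_A A=R$ surjects onto every simple, we get that every simple $R$-module is some $T_\mathtt{i}$. This gives the bijection of indexing sets, and we transport $\leq$ along it.
\item Write $\rad\overline\Delta_\mathtt{i}$ for the radical. A map $P^R_\mathtt{j}\to\overline\Delta_\mathtt{i}$ with image not contained in the radical forces $T_\mathtt{j}$ to be the top. The composition factors $T_\mathtt{j}$ of $\rad\overline\Delta_\mathtt{i}$ are detected by $\Hom_R(R\otimes_A P^A_\mathtt{j}, \overline\Delta_\mathtt{i})\cong\Hom_A(V\otimes_A P^A_\mathtt{j},L_\mathtt{i})$. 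Via the $\overline V$-term and directedness, these vanish unless $\mathtt{j}<\mathtt{i}$.
\item Together with the multiplicity $[\overline\Delta_\mathtt{i}:T_\mathtt{i}]=1$, this shows that $\overline\Delta_\mathtt{i}$ is a quotient of $P_\mathtt{i}$ with the correct composition factors.
\item To see it is the \emph{largest} such quotient, we check that $\Ext^1_R(\overline\Delta_\mathtt{i},T_\mathtt{j})=0$ for $\mathtt{j}\le\mathtt{i}$ in the appropriate sense. We use the Ext comparison together with the one-cyclic directedness $\Ext^1_A(L_\mathtt{i},L_\mathtt{j})=0$ for $\mathtt{i}\nleq\mathtt{j}$.
\end{itemize}

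\textbf{Step 2: the filtration property.} The projective $R\otimes_A P^A_\mathtt{i}$ is filtered by $R\otimes_A(\text{composition factors of } P^A_\mathtt{i})$, because induction is exact. Directedness of $A$ (here $P^A_\mathtt{i}$ has composition factors $L_\mathtt{j}$ only with $\mathtt{j}\geq\mathtt{i}$) lets us reorder this filtration so that $\overline\Delta_\mathtt{i}$ sits on top. Since $P_\mathtt{i}$ is a summand of $R\otimes_A P^A_\mathtt{i}$, and $\mathcal F(\overline\Delta)=\Ind_A^R$ is closed under summands (it is the essential image of an exact functor that reflects this, by Step 1's Hom computations), $P_\mathtt{i}$ inherits the required $\overline\Delta$-filtration with factors $\overline\Delta_\mathtt{j}$, $\mathtt{j}\geq\mathtt{i}$.

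\textbf{Step 3: the dual statement.} For the left algebra $L$ we apply the same argument to $\Hom_A(L,-)$, using the dual adjunction and $\Hom_A(\overline V\otimes_A L_\mathtt{i},L_\mathtt{j})$ as the obstruction term. Alternatively, we pass to the opposite coring and use $\Bbbk$-duality to swap projective and injective.

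The main obstacle is Step 1. We must identify the top and radical of $R\otimes_AL_\mathtt{i}$ and verify maximality without assuming an algebraically closed field. The key input there is the vanishing of the $\overline V$-term; closure of $\Ind_A^R$ under summands in Step 2 is the secondary subtle point.
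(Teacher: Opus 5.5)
There is a genuine gap at the start of Step 1. From $\End_R(R\otimes_A L_\mathtt{i})\cong\End_A(L_\mathtt{i})$ you correctly get that $R\otimes_A L_\mathtt{i}$ is indecomposable. But ``indecomposable, so it has simple top'' is false: a module whose endomorphism ring is a division ring need not have simple top. For example, the representation $\Bbbk\to\Bbbk\leftarrow\Bbbk$ of $\mathtt{1}\to\mathtt{2}\leftarrow\mathtt{3}$ has endomorphism ring $\Bbbk$ and top $L_\mathtt{1}\oplus L_\mathtt{3}$. Everything after this depends on it: the tops $T_\mathtt{i}$, the bijection with simple $R$-modules, and $\overline\Delta_\mathtt{i}$ being a quotient of $P_\mathtt{i}$. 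Your distinctness argument also does not work as written. $T$ is a quotient of $\overline\Delta_\mathtt{j}$, not a submodule, so there is no map $T\to\overline\Delta_\mathtt{j}$ to compose with.

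The missing idea, which is how the paper starts, is to compute Homs out of the induced projectives rather than endomorphisms of $R\otimes_AL_\mathtt{i}$. The space $\Hom_R(R\otimes_AP^A_\mathtt{i},R\otimes_AL_\mathtt{j})\cong\Hom_A(P^A_\mathtt{i},L_\mathtt{j})\oplus\Hom_A(\overline V\otimes_AP^A_\mathtt{i},L_\mathtt{j})$ vanishes for $\mathtt{i}\nleq\mathtt{j}$. For $\mathtt{i}=\mathtt{j}$ it is $\End_A(L_\mathtt{i})$, a one-dimensional module over that division ring. Hence exactly one indecomposable summand $P^R_\mathtt{i}$ of $R\otimes_AP^A_\mathtt{i}$ maps non-trivially to $R\otimes_AL_\mathtt{i}$. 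Since $R\otimes_AP^A_\mathtt{i}\to R\otimes_AL_\mathtt{i}$ is surjective, $P^R_\mathtt{i}$ surjects onto $R\otimes_AL_\mathtt{i}$, and this gives the simple top. The same vanishing pattern gives $P^R_\mathtt{i}\not\cong P^R_\mathtt{j}$ for $\mathtt{i}\neq\mathtt{j}$. The horseshoe lemma applied to $R=R\otimes_AA$ then shows that the $P^R_\mathtt{i}$ exhaust the indecomposable projectives. Your fourth bullet uses exactly this tool, but only after assuming the conclusion it should supply.

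The maximality step is also too thin. You invoke only $\Ext^1$ and directedness in degree one. However, $T_\mathtt{j}$ is not induced, so the comparison with $A$ does not apply to $\Ext^1_R(\overline\Delta_\mathtt{i},T_\mathtt{j})$ directly. Passing to the top via $0\to\radoperator\overline\Delta_\mathtt{j}\to\overline\Delta_\mathtt{j}\to T_\mathtt{j}\to0$ raises the degree. The paper therefore proves $\Ext^l_R(\overline\Delta_\mathtt{i},T_\mathtt{j})=0$ for \emph{all} $l>0$ and all $\mathtt{j}<\mathtt{i}$, by induction on $\mathtt{j}$. Here $\mathtt{j}<\mathtt{i}$ is strict, since extensions by $T_\mathtt{i}$ are allowed for proper standard modules. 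The induction uses two facts. First, $\Ext^l_R(R\otimes_AL_\mathtt{i},R\otimes_AL_\mathtt{j})$ is a quotient of $\Ext^l_A(L_\mathtt{i},L_\mathtt{j})$, because $\overline V\otimes_AL_\mathtt{i}$ is projective. Second, $\Ext^l_A(L_\mathtt{i},L_\mathtt{j})$ vanishes for $\mathtt{j}<\mathtt{i}$ in every degree.

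Finally, in Step 2 you assert that $\mathcal F(\overline\Delta)$ is closed under direct summands ``as the essential image of an exact functor that reflects this''. That is not an argument. The paper instead concludes directly from $R=R\otimes_AA$ being filtered by the $\overline\Delta_\mathtt{i}$.
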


\begin{proof}
We start by considering the following sequence of isomorphisms: 
\begin{equation}\label{chainisomorphism}
\begin{aligned}
\Hom_R(R\otimes_A P_\mathtt{i}^A, R\otimes_A L_\mathtt{j}^A)&\cong\Hom_A(V\otimes_A P_\mathtt{i}^A, L_\mathtt{j}^A)\\
&\cong \Hom_A(P_\mathtt{i}^A\oplus (\overline{V}\otimes_A P_\mathtt{i}^A), L_\mathtt{j}^A)\\
&\cong \Hom_A(P_\mathtt{i}^A, L_\mathtt{j}^A)\oplus \Hom_A(\overline{V}\otimes_A P_\mathtt{i}^A, L_\mathtt{j}^A) 
\end{aligned}
\end{equation}
where the middle isomorphism follows from applying $-\otimes_A P_\mathtt{i}^A$ to the sequence of $A$-bimodules $0\to \overline{V}\to V\to A\to 0$ and using that $P_\mathtt{i}^A$ is projective. Note that in the last line, the first summand vanishes for $\mathtt{i}\neq \mathtt{j}$ and the second summand vanishes for $\mathtt{i}\nless \mathtt{j}$ according to the third property of a one-cyclic directed coring combined with the second one, i.e.
\begin{itemize} 
\item for $\mathtt{i}\nleq \mathtt{j}$, the $\Hom$-space in \eqref{chainisomorphism} vanishes (here we use that the second condition of one-cyclic directed implies that all composition factors of $P_\mathtt{i}^A$ are of the form $L_\mathtt{j}^A$ for $\mathtt{j}\geq \mathtt{i})$ and
\item for $\mathtt{i}=\mathtt{j}$ the $\Hom$-space in \eqref{chainisomorphism} is isomorphic to $\Hom_A(P_\mathtt{i}^A,L_\mathtt{i}^A)\cong \End_A(L_\mathtt{i}^A)$. 
\end{itemize}

Therefore, there is at most one indecomposable summand of $R\otimes_A P_\mathtt{i}^A$, with a non-zero map to $R\otimes_A L_\mathtt{i}^A$. Assume that there is a decomposition of $R$-modules $R\otimes_A P_\mathtt{i}^A\cong \bigoplus Q$. Then $\End_A(L_\mathtt{i}^A)\cong \Hom_R(R\otimes_A P_\mathtt{i}^A, R\otimes_A L_\mathtt{i}^A) \cong \bigoplus \Hom_R(Q, R\otimes_A L_\mathtt{i}^A)$ as left $\End_A(L_\mathtt{i}^A)$-modules. However, since $L_\mathtt{i}^A$ is simple, $\End_A(L_\mathtt{i}^A)$ is a skew field and only one of the summands can be non-zero. Denote this indecomposable summand by $P_\mathtt{i}^R$. We claim that these form a complete set of indecomposable projective $R$-modules. First note that they are all non-isomorphic since for $\mathtt{i}\nleq \mathtt{j}$, the space $\Hom_R(P_\mathtt{i}^R, R\otimes_A L_\mathtt{j}^A)$ vanishes while the space $\Hom_R(P_\mathtt{j}^R, R\otimes_A L_\mathtt{j}^A)$ does not. Furthermore, observe that since there is a surjective map $R\otimes_A P_\mathtt{i}^A\twoheadrightarrow  R\otimes_A L_\mathtt{i}^A$ and $P_\mathtt{i}^R$ is the only summand of $R\otimes_A P_\mathtt{i}^A$ mapping non-trivially to $R\otimes_A L_\mathtt{i}^A$, there is a surjective map $P_\mathtt{i}^R\twoheadrightarrow  R\otimes_A L_\mathtt{i}^A$. By the horseshoe lemma, the $P_\mathtt{i}^R$ generate every module filtered by $R\otimes_A L_\mathtt{i}^A$, in particular, $R\cong R\otimes_A A$. It follows that $R$ is a direct sum of modules isomorphic to $P_\mathtt{i}^R$. 

Transferring the partial order on the simple $A$-modules to the $P_i^R$ yields an induced partial ordering on the simple $R$-modules. Using the calculation \eqref{chainisomorphism} of $\Hom$-spaces, with respect to this ordering, all composition factors of $R\otimes_A L_\mathtt{i}^A$ are of the form $L_\mathtt{j}^R$ for $\mathtt{j}\leq \mathtt{i}$ with $\mathtt{i}$ occurring only once. It follows that $R\otimes_A L_\mathtt{i}^A$ is a quotient of the maximal such highest weight module $\overline{\Delta}_\mathtt{i}^R$.

To show that the $R\otimes_A L_\mathtt{i}^A$ are not proper quotients of the $\overline{\Delta}_\mathtt{i}^R$, it suffices to show that they cannot be extended by simple modules $L_\mathtt{j}^R$ for $\mathtt{j}<\mathtt{i}$. For this we show by induction on the partial order on the simples that $\Ext^l_R(R\otimes_A L_\mathtt{i}^A, L_\mathtt{j}^R)=0$ for all $l>0$ and $\mathtt{j}<\mathtt{i}$. For $\mathtt{j}$ minimal, the module $R\otimes_A L_\mathtt{j}^A$ only has one composition factor, and is therefore isomorphic to the simple module $L_\mathtt{j}^R$. It follows that 
\[\Ext^l_R(R\otimes_A L_\mathtt{i}^A, L_\mathtt{j}^R)\cong \Ext^l_R(R\otimes_A L_\mathtt{i}^A, R\otimes_A L_\mathtt{j}^A)=0,\]
since the latter is a quotient of $\Ext^l_A(L_\mathtt{i}^A,L_\mathtt{j}^A)$, which vanishes since $A$ is one-cyclic directed and $\mathtt{j}<\mathtt{i}$. Now assume by induction that $\Ext^l_R(R\otimes_A L_\mathtt{i}^A, L_\mathtt{k}^R)=0$ for all $l>0$ and $\mathtt{k}<\mathtt{j}<\mathtt{i}$. Then, as all composition factors of $\radoperator(R\otimes_A L_\mathtt{j}^A)$ satisfy $\mathtt{k}<\mathtt{j}$, a long exact sequence in cohomology argument shows that $\Ext^l_R(R\otimes_A L_\mathtt{i}^A, \radoperator(R\otimes_A L_\mathtt{j}^A))=0$ for all $l>0$. However, applying $\Hom_R(R\otimes_A L_\mathtt{i}^A,-)$
 to the short exact sequence 
\[
\begin{tikzcd}
0\arrow{r} &\radoperator(R\otimes_A L_\mathtt{j}^A) \arrow{r} &R\otimes_A L_\mathtt{j}^A\arrow{r} &L_\mathtt{j}^R\arrow{r} &0    
\end{tikzcd}
\]
yields 
\[
\begin{tikzcd}[column sep=2ex]
\Ext^l_R(R\otimes_A L_\mathtt{i}^A, R\otimes_A L_\mathtt{j}^A)\arrow{r} &\Ext^l_R(R\otimes_A L_\mathtt{i}^A, L_\mathtt{j}^R)
\arrow{r} &\Ext^{l+1}_R(R\otimes_A L_\mathtt{i}^A, \radoperator(R\otimes_A L_\mathtt{j}^A)),
\end{tikzcd}
\]
and the middle term vanishes since the outer terms vanish. Consequently, $R\otimes_A L_\mathtt{i}^A\cong \overline{\Delta}_\mathtt{i}^R$. Thus, we have shown that $R$ has a filtration by the proper standard modules $\overline{\Delta}_\mathtt{i}$ and is therefore right standardly stratified.  
\end{proof}

\begin{rmk}
The approach taken in this article to prove statement \eqref{mainthm3:c} in Theorem
\ref{mainthm3} for right standardly stratified algebras fails for left
standardly stratified algebras for the following reason:

Let $R$ be a left standardly stratified algebra. Then the  
$\mathcal{F}(\Delta)$-resolution dimension is finite if and only if $R$ is
quasi-hereditary.
  
To prove this claim, recall that
the category $\mathcal{F}(\Delta)$ is a resolving subcategory of
$\modu{R}$. Using Theorem \ref{auslander-reiten}, such a resolving subcategory
has finite resolution dimension if and only if it can be written as 
\[{}^{\perp_{>0}} T=\{X\in \modu{R}\mid \Ext^i_R(X,T)=0\text{ for all }i>0\}\] 
for a cotilting module $T$. However, according to \cite[Theorem 2.4]{AHLU00}
this holds if and only if $R$ is quasi-hereditary.   
\medskip

Moreover, a result for left standardly stratified algebras necessarily has
to be different from Theorem \ref{mainthm3}, for the following reason:

Let $R$ be a left standardly stratified algebra. Assume that $R$ is its own regular exact Borel subalgebra. Then $R$ is also directed, so in particular quasi-hereditary. On the other hand, it can happen that a left standardly stratified algebra $R$ has a regular exact Borel subalgebra $A$ such that $A\subseteq R$ is a Frobenius extension. For example consider a local Frobenius algebra $R$. Then the only standard module is $R$, thus a regular exact Borel subalgebra is given by the semisimple part of $R$, which is $A$, and $A\subseteq R$ is a Frobenius extension since $R$ is a Frobenius algebra. 
\end{rmk}

\section*{Acknowledgments}
Tomasz Brzezi\'nski would like to express his gratitude for the hospitality of Institute of Algebra and Number Theory at University of Stuttgart, where the work on this paper has started. The research  of Tomasz Brzezi\'nski is partially supported by the National Science Centre, Poland, through the WEAVE-UNISONO grant no.\ 2023/05/Y/ST1/00046. Teresa Conde acknowledges support by the Deutsche Forschungsgemeinschaft (DFG, German Research Foundation) -- Project-ID 491392403 - TRR 358. Julian K\"ulshammer acknowledges support by the Swedish Research Council -- registration
number 2024-05357. The authors want to thank Anna Rodriguez Rasmussen for help with providing a cleaner argument for Proposition \ref{rightstandardlyofonecyclic}.

\bibliographystyle{alpha}
\bibliography{publication}

\end{document}